\newcommand{\tabincell}[2]{\begin{tabular}{@{}#1@{}}#2\end{tabular}}
\newtheorem{theorem}{\textbf{Theorem}}
\newtheorem{lemma}{\textbf{Lemma}}
\newtheorem{assumption}{Assumption}
\newtheorem{remark}{Remark}
\begin{document}

\title{
Model-free False Data Injection Attack in Networked Control Systems: A Feedback Optimization Approach}
\author{Xiaoyu Luo$^\dag$, \IEEEmembership{Student Member, IEEE}, Chongrong Fang$^\dag$, \IEEEmembership{Member, IEEE}, Jianping He$^\dag$, \IEEEmembership{Senior Member, IEEE}, Chengcheng Zhao$^\ddag$, \IEEEmembership{Member, IEEE}, and Dario Paccagnan$^\S$, \IEEEmembership{Member, IEEE}
\thanks{$^\dag$: Department of Automation, Shanghai Jiao Tong University, and Key Laboratory of System Control and Information Processing, Ministry of Education of China, Shanghai 200240, China. E-mail: xyl.sjtu@sjtu.edu.cn, crfang@sjtu.edu.cn, jphe@sjtu.edu.cn.}
\thanks{$^\ddag$: State Key Laboratory of Industrial Control Technology and Institute of Cyberspace Research, Zhejiang University, China. E-mail: zccsq90@gmail.com.}
\thanks{$^\S$: Department of Computing, Imperial College London, London SW7 2AZ, UK. E-mail: d.paccagnan@imperial.ac.uk.}
}

\maketitle

\begin{abstract}
Security issues have gathered growing interest within the control systems community, as physical components and communication networks are increasingly vulnerable to cyber attacks. In this context, recent literature has studied increasingly sophisticated \emph{false data injection} attacks, with the aim to design mitigative measures that improve the systems' security. Notably, data-driven attack strategies -- whereby the system dynamics is oblivious to the adversary -- have received increasing attention. However, many of the existing works on the topic rely on the implicit assumption of linear system dynamics, significantly limiting their scope. Contrary to that, in this work we design and analyze \emph{truly} model-free false data injection attack that applies to general linear and nonlinear systems. More specifically, we aim at designing an injected signal that steers the output of the system toward a (maliciously chosen) trajectory. We do so by designing a zeroth-order feedback optimization policy and jointly use probing signals for real-time measurements. We then characterize the quality of the proposed model-free attack through its optimality gap, which is affected by the dimensions of the attack signal, the number of iterations performed, and the convergence rate of the system. Finally, we extend the proposed attack scheme to the systems with internal noise. Extensive simulations show the effectiveness of the proposed attack scheme.
\end{abstract}

\begin{IEEEkeywords}
Data-driven, false data injection attacks, zeroth-order feedback optimization.
\end{IEEEkeywords}

\section{Introduction}
\IEEEPARstart{N}{etworked} control systems have seen a surge of interest in recent years, largely owing to their widespread applicability to commonly encountered problems including mobile robots coordination, smart grids operation, unmanned vehicles control, remote diagnosis, to mention but a few \cite{antsaklis2007special,gupta2009networked,zhang2019networked}.
As physical components and communication networks are increasingly vulnerable to cyber attacks, security issues have gathered growing traction in the community.

In this context, false data injection (FDI) -- whereby an attacker injects false data by compromising sensor readings or communication channels -- is a commonly encountered form of attack \cite{liu2011false}. Crucially, through an FDI attack, an adversary can cause significant damage to the infrastructure while remaining undetected. 

\subsection{Motivation}
Against this backdrop, recent literature has proposed increasingly sophisticated FDI attacks, with the hope that understanding their workings would lead to the design of mitigative measures that improve the systems' security \cite{chen2017optimal,zhao2020data,wang2021optimal,zhang2022design}. Among them, \emph{data-driven} attack strategies -- whereby an attack signal is designed solely relying on the available system's measurements -- have received growing attention. 
However, three critical issues deserve further consideration. First, many of the existing works tacitly assume that, while unknown, the underlying system follows a linear time invariant dynamics \cite{kim2014subspace,an2017data,Alisic23}. This significantly restricts the power of the adversary. Second, without any prior information about the system model, the ability that the adversary achieves its attack objective needs to be explored. It is conducive to analyzing the system’s vulnerability. Third, when the capability of the adversary is limited, e.g., the attack energy is limited \cite{zhang2015optimal}, it is practical and promising to excavate the potential attack's impact  while achieving the malicious objective. 

In this work, we tackle the aforementioned issues by proposing a novel data-driven FDI attack that, crucially, does not rely on prior information regarding the system's model, and applies to general non-linear dynamics. Towards this goal, we leverage zeroth-order optimization methods, a class of optimization algorithms that do not necessitate the availability of the cost function's gradient, but simply exploit function evaluations \cite{zhang2022new}. More specifically, we leverage zeroth-order optimization methods in the context of feedback optimization, where optimization algorithms are utilized as feedback controllers for dynamical systems \cite{Colombino20}. These tools provide a new approach to design data-driven attacks.

\subsection{Contributions}
In this work, we design a model-free FDI attack that does not rely on knowledge of the underlying dynamics and that applies to general linear \emph{and} non-linear systems. Specifically, we aim at steering the output of an unknown dynamical system to a (maliciously chosen) trajectory, through the sole use of real-time measurements. We do so in the bounded attack model, where an upper bound on the energy of the injected signal is given. A comparison between the existing FDI attack strategies and our work is shown in Table \ref{table1:comparison}. 


Compared to our conference version \cite{luo2022model-free}, we extend the proposed model-free attack strategy to systems with internal noise and explore its effects on the optimality of the obtained solutions. Moreover, we significantly expand upon the related work, motivation, performance analysis, and simulation results.
	The main contributions are summarized as follows.
			 	\begin{table}[h]
		\caption{Comparison of FDI attack designs}
		\label{table1:comparison}
		\vspace{-10pt}
		\begin{center}
			\begin{threeparttable}
			\begin{tabular}{c c c c}
				\specialrule{0.15em}{3pt}{3pt}
				  &
				\tabincell{c}{Model-based\\
				\cite{chen2017optimal,zhang2022design}, etc.}  & 
				 \tabincell{c}{Data-driven\\
				\cite{an2017data,zhao2020data}, etc.} &
				 \textbf{Our work}  \\
				
				\specialrule{0.05em}{3pt}{3pt}
				\textbf{\tabincell{c}{Prior \\Information}}  & \tabincell{c}{Knowledge \\of dynamics} & \tabincell{c}{Linear \\ Dyanamics}&None \\
				
				\specialrule{0.05em}{3pt}{3pt}
				\textbf{\tabincell{c}{Objective\\}}  & \tabincell{c}{Steer state\\ to desired point}  & \tabincell{c}{ Remain\\ undetected} & \tabincell{c}{Steer state to\\desired trajectory}\\

				\specialrule{0.05em}{3pt}{3pt}
				\textbf{\tabincell{c}{Approach\\}}  & \tabincell{c}{Dynamic\\ programming } & \tabincell{c}{Subspace \\method} &  \tabincell{c}{Zeroth-order\\ feedback-optimization}\\
				
				\specialrule{0.15em}{3pt}{3pt}
			\end{tabular}
	\end{threeparttable}
    \end{center}
		
	\end{table}
\begin{itemize}
		\item We construct a zeroth-order feedback optimization framework for the design of an FDI attack strategy, where the adversary has limited capability and no prior information about the system model.
		\item We propose a model-free attack scheme that drives the output of the system to a maliciously chosen trajectory. From a methodological standpoint, its novelty lies in directly updating the attack signal based on the objective function evaluations.
		\item We theoretically characterize the solution's optimality gap. Further, we analyze the impact of the attack signal's dimension, of the iteration numbers, of the variance of the objective function, and of the convergence rate of the dynamical system on the optimality gap. Finally, we extend the proposed model-free attack scheme to noisy systems and derive an upper bound of the optimality gap.
		
	\end{itemize}
 
\subsection{Paper organization}
	The rest of the paper is organized as follows. Section \ref{Related-work} reviews the related works. Section \ref{II} introduces the system and adversary model and formulates the FDI attack design problem. In Section \ref{III}, we design the proposed model-free attack strategy and analyze the optimality gap. Section \ref{secIV} extends the model-free attack scheme to noisy systems. In Section \ref{secV}, we analyze the design of stealthy attacks. Simulation results are presented in Section \ref{IV}. Finally, we conclude our work in Section \ref{V}.

\section{Related works}\label{Related-work}
Existing FDI attack strategies can be divided into two streams depending on whether they rely on a model-based or data-driven approach.

The literature on model-based FDI attack strategy is vast, and includes \cite{chen2017optimal,guo2018worst,wang2021optimal,zhang2014online,zhang2022design}. In the following, we review only the works that are most relevant to ours.
 When the adversary is aware of the system dynamics and other critical information (e.g., statistical properties of noise and the controller's feedback matrix), Chen \emph{et al.} \cite{chen2017optimal} formulate a linear quadratic cost function to steer the system state to a desired value, while satisfying a detection-avoidance constraint.
	In similar settings, i.e., when knowledge of the system dynamics is available, Guo \emph{et al.} \cite{guo2018worst} propose an innovation-based linear attack strategy and formulates a two-stage optimization problem to obtain the most-damaging attack policy. 
	In \cite{wang2021optimal}, Wang proposes an optimal attack strategy to deteriorate the performance of fault detectors by solving coupled backward recursive Riccati difference equations (RDEs). 
	In \cite{zhang2014online}, the authors design an FDI attack strategy against a remote state estimation algorithm with sensor-to-estimator communication rate constraint.
	With the knowledge of all system parameters except for the filter gain, Zhang \emph{et al.} \cite{zhang2022design} design stealthy attacks based on the Fisher information matrix to maximize the estimation error. 
	Note that the design of the above FDI attack strategies is mostly based on the full knowledge of the system model. However, when the system model changes or its exact knowledge is unavailable, the previous approaches can not be applied. 
	
	On the other hand, data-driven attack strategies have recently gained momentum \cite{esmalifalak2011stealth,kim2014subspace,an2017data,zhao2020data}. Two approaches are typically pursued. The first approach consists in exploiting offline observation of the system's dynamics to identify the matrices of the linear system model. Naturally, this approach does not apply to genuinely non-linear dynamics. The second approach consists in directly utilizing input-output data to design an attack strategy. For example, Esmalifalak \emph{et al.} \cite{esmalifalak2011stealth} apply linear independent component analysis (ICA) to estimate the system Jacobian matrix and design unobservable attacks based on the inferred structure. Kim \emph{et al.} \cite{kim2014subspace} extend the work in \cite{liu2011false} and present two data-driven attack strategies based on subspace methods. An \emph{et al.} \cite{an2017data} formulate the attack goal as a data-based $\mathcal{L}_2$-gain composite optimization problem and propose a new multiobjective adaptive dynamic programming (ADP) method for designing the attack policy. Zhao \emph{et al.} \cite{zhao2020data} propose an undetected FDI attack strategy based on a subspace identification technique to maximize the state estimation error. Note that the linearity of the system dynamics is still a crucial and implicit assumption necessary for all the aforementioned works. Our work relaxes this assumption and provides a new perspective to construct a completely model-free attack strategy based on the zeroth-order feedback optimization framework. 

\section{Problem formulation}\label{II}
 \subsection{System dynamic model $\&$ adversary model}
    Consider a discrete-time dynamical system
    \begin{align}\label{1}
    \begin{split}
    x_{k+1}=&f(x_k,u_k),\\
    y_{k}=&g(x_k),
    \end{split}
    \end{align}
	where $x_k\in \mathbb{R}^{n}$ is the system state at iteration $k$, $u_k\in \mathbb{R}^{m}$ is the system input, $y_{k}\in \mathbb{R}^{q}$ is the system output. 
	\begin{assumption}\label{ass1}
		The system (\ref{1}) is stable under the control of system input $u_k, \forall k \in \mathbb{N}$.
	\end{assumption}
 
 Consider that the adversary can compromise the stable system and manipulate the state $x_k$ arbitrarily and aims to steer the output value $y^a_k$ to its expected trajectory. The dynamical system under attack can be rewritten as
	\begin{align}\label{6}
	\begin{split}
	     x_{k+1}^{a}=&f(x_k^{a},u_k) + \Gamma \theta_{k}, \\
	     y_{k}^{a}=&g(x_k^{a}),
	\end{split}
	\end{align}
	where the attack selection matrix $\Gamma \in \mathbb{R}^{n\times p}$ is defined as the non-zero columns of $\mathrm{diag}(\gamma_1,\ldots,\gamma_n)$ with the binary variable $\gamma_i=1$ if the $i$-th dimensional state is compromised, and $\theta_k\in \mathbb{R}^{p}$ is the injected false data.
	Then, we make the following assumption about the ability of the adversary.
	\begin{assumption}\label{ass3}
		The capability of the adversary is limited, i.e., $\theta_k^{\mathrm{T}} \theta_k \leq R$, where $R$ is the upper bound of attack energy.
	\end{assumption}
 Assumption \ref{ass3} is common for energy-constrained adversaries \cite{zhang2015optimal}, which means that the injected false data is bounded. With Assumptions \ref{ass1} and \ref{ass3}, it is easy to obtain the following lemma to show that the compromised system (\ref{6}) is still stable.  
    \begin{lemma}\label{addass4}
    	For the compromised system (\ref{6}), there exists a unique steady-state map $x^a_{ss}: \mathbb{R}^{m}\times  \mathbb{R}^{p} \rightarrow \mathbb{R}^{n}$ such that $\forall \theta, f'(x^a_{ss}(u,\theta),u,\theta) \triangleq f(x^a_{ss}(u,\theta),u)+ \Gamma \theta=x^a_{ss}(u,\theta)$. The map $x^a_{ss}(u,\theta)$ is $M_x$-Lipschitz with respect to $\theta$, and the function $g(x^a)$ is $M_g$-Lipschitz with respect to $x^a$.
    \end{lemma}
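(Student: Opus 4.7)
The plan is to establish the three assertions in sequence, using a contraction mapping argument for the existence and uniqueness of the steady-state map, and then leveraging that contraction property to derive the Lipschitz estimate in $\theta$. First, I would interpret Assumption \ref{ass1} as providing, for each fixed $u$, a uniform contraction constant $\rho\in[0,1)$ for the map $x\mapsto f(x,u)$ (any reasonable notion of ``stable'' used by the authors yields such a property, possibly after a change of norm). Under this reading, the perturbed map $\tilde f_{u,\theta}(x):=f(x,u)+\Gamma\theta$ is also a $\rho$-contraction in $x$, since $\Gamma\theta$ enters only as an additive constant and does not affect Lipschitz constants in $x$. The Banach fixed point theorem then yields, for every pair $(u,\theta)$, a unique fixed point $x^a_{ss}(u,\theta)$ satisfying $f(x^a_{ss}(u,\theta),u)+\Gamma\theta=x^a_{ss}(u,\theta)$, which defines the desired steady-state map $f'(x^a_{ss}(u,\theta),u,\theta)=x^a_{ss}(u,\theta)$.

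Next, to obtain the Lipschitz bound in $\theta$, I would subtract the fixed point equations for two attack signals $\theta_1,\theta_2$ sharing the same $u$, obtaining
\[
x^a_{ss}(u,\theta_1)-x^a_{ss}(u,\theta_2)=\bigl[f(x^a_{ss}(u,\theta_1),u)-f(x^a_{ss}(u,\theta_2),u)\bigr]+\Gamma(\theta_1-\theta_2).
\]
Taking norms, applying the $\rho$-contraction of $f(\cdot,u)$, and rearranging yields
\[
\|x^a_{ss}(u,\theta_1)-x^a_{ss}(u,\theta_2)\|\le\frac{\|\Gamma\|}{1-\rho}\|\theta_1-\theta_2\|,
\]
so the map is $M_x$-Lipschitz with $M_x=\|\Gamma\|/(1-\rho)$.

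For the Lipschitz property of $g$, I would combine Assumption \ref{ass3} with the first claim to conclude that under any admissible attack the trajectories remain in a bounded, forward-invariant set whose radius depends only on $R$, $\rho$, and $\|\Gamma\|$. Assuming $g$ is continuously differentiable on that compact set (an implicit regularity condition that should be stated explicitly), its Jacobian is bounded there, yielding a local Lipschitz constant $M_g$ valid along every trajectory of interest.

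The main obstacle is the imprecise wording of Assumption \ref{ass1}: the authors only require the system to be ``stable'' without specifying a notion. The Banach-type argument above needs a genuine contraction — or, equivalently, global incremental exponential stability under some metric — and clarifying this hypothesis, together with the implicit smoothness of $g$ required for the second Lipschitz claim, is the delicate part. Once those two regularity points are pinned down, the remaining algebra is just the one-line rearrangement displayed above.
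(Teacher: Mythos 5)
Your proposal is correct in substance but takes a genuinely different route from the paper. The paper does not actually prove Lemma \ref{addass4} in situ: it appeals to \cite{he2022model} and to the implicit function theorem \cite[Theorem 1B.1]{dontchev2009implicit}, and the template of the intended argument appears only in the proof of the noisy analogue, Lemma \ref{l6}. There, existence of $x^a_{ss}$ is tied to the Lyapunov inequalities (\ref{3})--(\ref{4}) with rate $\mu<1$, and the Lipschitz property in $\theta$ is obtained by implicitly differentiating $F(x,u,\theta)=f(x^a_{ss},u)+\Gamma\theta-x^a_{ss}=0$, with $M_x$ read off as a supremum of a ratio of partial derivatives (requiring $f$ continuously differentiable and $\frac{\partial F}{\partial x^a_{ss}}+\frac{\partial F}{\partial u}\frac{\partial u}{\partial x^a_{ss}}$ nonsingular). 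You instead run a Banach fixed-point argument: a contraction constant $\rho$ for $x\mapsto f(x,u)$ is unaffected by the additive term $\Gamma\theta$, giving existence and uniqueness at once, and subtracting the two fixed-point equations gives the explicit constant $M_x=\|\Gamma\|/(1-\rho)$ (in fact $\|\Gamma\|=1$ here, since $\Gamma$ is a column selection of a $0/1$ diagonal matrix). Your route is more elementary and dispenses with differentiability of $f$, at the price of reading Assumption \ref{ass1} as incremental/contractive stability; the paper's route needs $C^1$ regularity and a nonsingularity condition but plugs directly into the Lyapunov machinery ($\alpha_1,\alpha_2,\alpha_3,\mu$) used throughout the rest of the analysis. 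Both arguments must strengthen the vague word ``stable,'' and you are right to flag this; note also that the $M_g$-Lipschitz claim for $g$ is, in the paper as in your write-up, essentially an added regularity hypothesis rather than a consequence of anything proved (the paper's ``similarly, for $F_y=g(x^a_{ss})-y^a=0$'' again just invokes the implicit function theorem), so your honest framing of it as an implicit assumption on a compact invariant set is, if anything, more careful than the original.
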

\begin{remark}
Lemma \ref{addass4} is similar to \cite{he2022model} for guaranteeing the stability of the system. If the system under the bounded FDI attacks has no unique steady-state map $x^{a}_{ss}$, it is obvious that the system will diverse and even the original system (\ref{1}) is unstable. 
	The properties of the map $x^a_{ss}(u,\theta)$ can be ensured by the implicit function theorem \cite[Theorem 1B.1]{dontchev2009implicit}. 
	With Lemma \ref{addass4}, in the steady state we have
	\begin{align*}
	y^a=g(x^a_{ss}(u,\theta)) \triangleq h(u,\theta).
	\end{align*}
\end{remark}

	Additionally, the Lyapunov theorem presented in \cite[Theorem 2.7]{bof2018lyapunov} guarantees that there exist a Lyapunov function $V: \mathbb{R}^{n} \times \mathbb{R}^{m} \times \mathbb{R}^{p} \rightarrow \mathbb{R}$ and parameters $\alpha_1,\alpha_2,\alpha_3>0$ such that	
	\begin{align}
	\alpha_1 \|x^a-x^a_{ss}(u,\theta)\|^2\leq V(x^a,u,\theta) \leq \alpha_2 \|x^a-x^a_{ss}(u,\theta)\|^2, \label{3} \\
	V(f'(x^a_{ss}(u,\theta),u,\theta))-V(x^a,u,\theta)\leq -\alpha_3 \|x^a-x^a_{ss}(u,\theta)\|^2. \label{4}
	\end{align}
	Based on (\ref{3}) and (\ref{4}), the rate of the change in one step of the function value $V(x^a,u,\theta)$  is denoted by
	\begin{align}\label{a6}
	\mu \triangleq \frac{2\alpha_2}{\alpha_1} (1-\frac{\alpha_3}{\alpha_2}).
	\end{align}
	\begin{assumption}\label{ass2}
		The convergence rate $\mu$ satisfies $\mu<1$.
	\end{assumption}
 
	The smaller $\mu$ is, the faster the system converges to the steady state \cite{belgioioso2021sampled}. 
	The formal interpretation of $\mu$ will be presented later in Lemma \ref{l2}.
 
	\subsection{Problem formulation}
 	In this paper, we aim to design a completely model-free attack strategy, which is independent of the characteristics and parameters of the system model itself.
  
 Herein, we consider that the adversary's objective is to steer the output value $y^a_k$ to follow its expected malicious trajectory $\bar{y}_k$ as closely as possible. We also consider that the adversary has limited attack energy. Therefore, the total goal of adversaries is to reduce both the error between the true system output and expected trajectory and the consumed attack energy as much as possible. 
	In addition, since our proposed attack strategy performs the optimization with the same objective function at each iteration $k$, we omit the subscript $k$ and formally formulate the problem as
\begin{align}\label{problem}
\mathbf{\mathcal{P}_1}: \quad &\mathrm{min}_{\theta} ~\Phi(\theta,y^a)=\|y^a-\bar{y}\|+\theta^{\mathrm{T}} Q \theta \\
&\mathrm{s.t.}~  y^a=h(u,\theta), \nonumber \\
& \quad ~~ \theta^{\mathrm{T}} \theta \leq R, \nonumber
\end{align}
where $y^a=h(u,\theta)$ is the steady-state map under attacks in (\ref{6}) to guarantee the stability of the compromised system (\ref{6}), $\bar{y}$ is the expected trajectory and $Q\in \mathbb{R}^{p\times p}$ is the positive definite weight matrix chosen by the adversary according to the tradeoff between the limited attack energy and tracking deviation $\|y^a-\bar{y}\|$. We also make a common assumption for the optimized objective function as follows.
\begin{assumption}\label{ass4}
	The function $\Phi(\theta,y^a)$ is $M$-Lipschitz with respect to $\theta$, $M_y$-Lipschitz with respect to $y^a$, and $\inf_{\theta,y^a}\Phi(\theta,y^a)>-\infty$.
\end{assumption}

The challenges of solving problem $\mathcal{P}_1$ come from two aspects. One is the nonlinearity of the system model.
For the unknown nonlinear system model (\ref{6}), it is hard to regress its critical system parameters. The other is how to use the compromised measurements to guide the output value to move along the desired trajectory while reducing the consumed attack energy as much as possible. Since $h(u,\theta)$ is unknown, it is difficult to directly obtain the gradients of the objective function with respect to the independent variable $\theta$ to solve problem $\mathcal{P}_1$. 

The key idea of the zeroth-order optimization is to utilize the objective function evaluations to construct gradient estimates, thus avoiding using the gradients directly.
We aim to construct the gradient estimates of the objective function to solve problem $\mathcal{P}_1$. 
Different from the traditional zeroth-order optimization framework for the design of the controller with non-manipulated measurements, our design focuses on utilizing the compromised measurements to design the attack signal in the original control systems with designed controllers. Herein, we mainly explore the model-free attack strategy without detector constraints and the attack design under detector constraints will be analyzed in Section \ref{secV}.

\section{Model-free attack strategy design}\label{III}
    In this section, we first introduce the zeroth-order optimization framework, which is the basis of our attack strategy design. Then, we utilize real-time measurements to design the attack signal. Finally, we analyze the optimality of the proposed attack strategy.
    \subsection{Preliminaries of zeroth-order optimization}
     The attack strategy design in this paper is inspired by the gradient estimates based on the residual feedback in \cite{zhang2022new}. 
    
    For an objective function $\Phi(w):\mathbb{R}^{p} \rightarrow \mathbb{R}$, the gradient estimate proposed in \cite{zhang2022new} is
    \begin{align}\label{8}
    \hat{\nabla} \Phi(w_k)=\frac{v_k}{\delta}(\Phi(w_k+\delta v_k)-\Phi(w_{k-1}+\delta v_{k-1})),
    \end{align}
    where $v_k$ and $v_{k-1}$ are independent random vectors selected uniformly from the unit sphere $\mathcal{S}_{p} \triangleq \{v_k\in \mathbb{R}^{p}: \|v_k\|=1\}$, i.e., $v_k \sim U(\mathcal{S}_p) $ and $\delta>0$ is the smoothing parameter. Note that only a new objective function evaluation needs to be computed at each iteration in (\ref{8}) because the objective value evaluated at the previous iteration $k-1$ is reused at the current iteration $k$.

    According to \cite[Lemma 5]{zhang2022new}, $\hat{\nabla} \Phi(w_k)$ in (\ref{8}) is an unbiased estimate of the gradient of the smooth approximation $\Phi_{\delta}(w)$ for $\Phi(w)$ at $w_k$, where
    \begin{align}\label{9}
    \Phi_{\delta}(w)=\mathbb{E}_{v\sim U(\mathcal{S}_p)}[\Phi(w+\delta v)].
    \end{align}
    The properties of $ \Phi_{\delta}(w)$ are shown as follows.
    \begin{lemma}[\cite{he2022model}]
    	If $\Phi_{\delta}(w):\mathbb{R}^{p} \rightarrow \mathbb{R}$ is $M-$Lipschitz, then for any $w\in \mathbb{R}^{p}, \delta>0$ and $\Phi_{\delta}(w)$ defined in (\ref{9}), we have
    	\begin{subequations}
    	\begin{align}
    	 \mathbb{E}_{v \sim U(\mathbb{S}_p)} \lbrack \frac{p}{\delta} \Phi(w+\delta v)v \rbrack=& \nabla \Phi_{\delta}(w), \label{10a}\\
    	|\Phi_{\delta}(w) - \Phi(w)| \leq & M\delta, \label{10b}\\
    	\|\nabla \Phi_{\delta}(w) - \nabla \Phi(w) \|\leq & \frac{Mp}{\delta} \label{10c}.
    	\end{align}
    	\end{subequations}
    \end{lemma}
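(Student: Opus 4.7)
The plan is to establish each of (10a)--(10c) in turn, using standard techniques from the analysis of randomized smoothings. For (10a), the central tool is the divergence theorem in $\mathbb{R}^p$. Since $\Phi$ is only assumed Lipschitz, I would first argue that $\Phi_\delta$ is differentiable (via dominated convergence applied to a mollified approximation of $\Phi$) and then derive a boundary-integral representation of its gradient. Concretely, I would introduce the volume-averaged surrogate $G(w) := \tfrac{1}{\mathrm{vol}(\delta\mathbb{B}_p)}\int_{\delta\mathbb{B}_p} \Phi(w+u)\,du$, differentiate under the integral sign, and apply the divergence theorem to express $\nabla G(w)$ as a surface integral over $\partial(\delta\mathbb{B}_p)=\delta\mathcal{S}_p$. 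Parametrizing the surface via $u=\delta v$ with $v\in\mathcal{S}_p$, the factor $p/\delta$ arises naturally from the ratio of sphere surface area to ball volume, and rewriting the normalized surface integral as an expectation over $v\sim U(\mathcal{S}_p)$ yields precisely the right-hand side of (10a). A short argument then identifies $\nabla G = \nabla \Phi_\delta$, reconciling the ball- and sphere-smoothing viewpoints.

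For (10b), the bound follows immediately from the $M$-Lipschitz property of $\Phi$ together with Jensen's inequality:
\begin{align*}
|\Phi_\delta(w) - \Phi(w)|
&= \bigl|\mathbb{E}_{v}\left[\Phi(w+\delta v) - \Phi(w)\right]\bigr| \\
&\le \mathbb{E}_v |\Phi(w+\delta v) - \Phi(w)| \le M\delta,
\end{align*}
using $\|v\|=1$ almost surely in the final step.

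For (10c), I would start from the representation of $\nabla \Phi_\delta$ obtained in (10a), namely $\nabla \Phi_\delta(w) = \mathbb{E}_v\bigl[\tfrac{p}{\delta}\Phi(w+\delta v)\, v\bigr]$. A triangle inequality argument, combined with $\|v\|=1$ almost surely and the Lipschitz bound $\|\nabla \Phi\|\le M$ (valid wherever $\Phi$ is differentiable, by Rademacher's theorem), allows me to separately bound $\|\nabla \Phi_\delta(w)\|$ and $\|\nabla \Phi(w)\|$; collecting terms and tracking the $p/\delta$ prefactor yields the claimed estimate.

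The principal obstacle is (10a) when $\Phi$ is only Lipschitz: the interchange of gradient and integral, together with the surface-to-volume conversion, must be justified carefully, typically via a mollification argument combined with dominated convergence. Once (10a) is in hand, both (10b) and (10c) reduce to routine Lipschitz estimates.
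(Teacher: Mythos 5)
The paper itself offers no proof of this lemma --- it is imported verbatim from \cite{he2022model} --- so your proposal can only be judged on its own terms. Your argument for (\ref{10b}) is the standard Jensen-plus-Lipschitz one-liner and is correct (modulo reading the hypothesis as ``$\Phi$ is $M$-Lipschitz,'' which is clearly what is intended).

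For (\ref{10a}) there is a genuine gap at the last step. Your divergence-theorem computation is sound: it shows that $\frac{p}{\delta}\mathbb{E}_{v\sim U(\mathcal{S}_p)}[\Phi(w+\delta v)v]=\nabla G(w)$ for the \emph{ball} average $G(w)=\frac{1}{\mathrm{vol}(\delta\mathbb{B}_p)}\int_{\delta\mathbb{B}_p}\Phi(w+u)\,du$, which is exactly the Flaxman--Kalai--McMahan identity. But $G$ and the \emph{sphere} average $\Phi_\delta$ of (\ref{9}) are different functions, and $\nabla G=\nabla\Phi_\delta$ is false in general (in $p=1$ with $\Phi(w)=w^3$ near the origin, the sphere average has derivative $3w^2+3\delta^2$ while the ball average has derivative $3w^2+\delta^2$). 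The ``short reconciling argument'' you defer to does not exist; the honest resolution is that (\ref{10a}) must be read with $\Phi_\delta$ taken as the ball smoothing, as in the original sources, and your write-up should say so rather than promise an identification that cannot be made. For (\ref{10c}), the plan of bounding $\|\nabla\Phi_\delta(w)\|$ directly from the representation in (\ref{10a}) fails as described: $\frac{p}{\delta}\mathbb{E}[\,|\Phi(w+\delta v)|\,\|v\|\,]$ is controlled by $\sup|\Phi|$, not by the Lipschitz constant, so ``tracking the $p/\delta$ prefactor'' gives nothing useful. The missing ingredient is the centering trick: since $\mathbb{E}[v]=0$, you may replace $\Phi(w+\delta v)$ by $\Phi(w+\delta v)-\Phi(w)$ inside the expectation, after which the Lipschitz bound yields $\|\nabla\Phi_\delta(w)\|\le\frac{p}{\delta}\cdot M\delta=Mp$; combined with $\|\nabla\Phi(w)\|\le M$ from Rademacher, the triangle inequality gives $M(p+1)$. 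Even then, $M(p+1)\le\frac{Mp}{\delta}$ only when $\delta\le p/(p+1)$, not ``for any $\delta>0$'' as the lemma claims --- a looseness inherited from the source that you should flag explicitly rather than absorb silently.
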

   From (\ref{10c}), we know that $\Phi_{\delta}(w)$ is $\frac{Mp}{\delta}$-smooth, i.e., its gradient $\nabla \Phi_{\delta}(w)$ is $\frac{Mp}{\delta}$-Lipschitz continuous.

   \begin{figure}[t]
	\centering
	\includegraphics[width=0.45\textwidth]{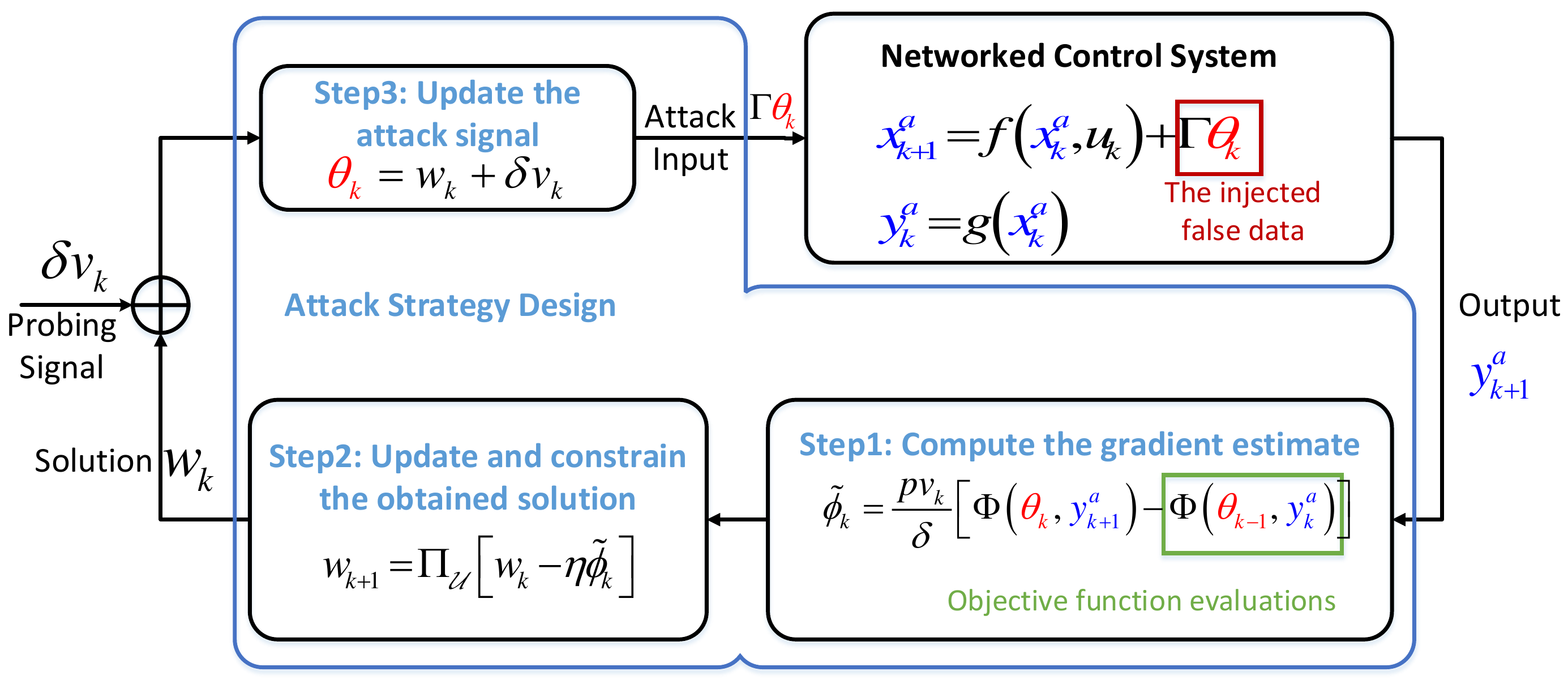}
	\caption{The schematic of model-free attack strategy design.}
	\label{schematic}
	\vspace*{-10pt}
\end{figure}
    \subsection{Attack strategy design}
    
    The proposed attack strategy iteratively updates attack inputs along the composite direction of the negative gradient estimates of the objective function and the projected gradients. Such a design only utilizes real-time measurements and thus makes the attack strategy intrinsically model-free. 
    
    We denote $\mathcal{U}$ as the constraint set in problem $\mathcal{P}_1$. With the zeroth-order optimization framework, the proposed model-free attack strategy can be divided into three steps and the schematic of the attack strategy design is shown in Fig.\ref{schematic}.  \\   
    \textbf{Step $1$: Compute the gradient estimate $\tilde{\phi}_k$}
    \begin{align}\label{add11}
    \tilde{\phi}_k=\frac{p v_k}{\delta} [\Phi(\theta_k,y_{k+1}^{a})- \Phi(\theta_{k-1},y_{k}^{a}) ],
    \end{align}
    where $v_k$ and $v_{k-1}$ are independent probing signals and follow the uniform distribution from the Euclidean unit sphere $\mathcal{S}_p$, i.e., $v_k\sim U(\mathcal{S}_p)$.
    Since only the real-time measurements are available for the adversary and it is hard to directly compute the gradients of the objective function in problem $\mathcal{P}_1$, we first utilize the probing signal $v_k$ for measurements, which can be used to construct the objective function evaluations $\Phi(\theta_k,y_{k+1}^{a})$ and $\Phi(\theta_{k-1},y_{k}^{a})$ at the current and previous iteration. Herein, the historic function evaluation $\Phi(\theta_{k-1},y^a_{k})$ is reused at iteration $k+1$. Then we compute the gradient estimates $\tilde{\phi}_k$ of the objective function by these evaluations with (\ref{add11}).\\
    \textbf{Step $2$: Update the obtained solution $w_{k+1}$}
    \begin{align}\label{add12}
    w_{k+1}=\Pi_{\mathcal{U}}[w_k - \eta \tilde{\phi}_k],
    \end{align}
    where $\Pi_{\mathcal{U}}[\cdot]$ is the projection onto constrained set $\mathcal{U}$, i.e., $\Pi_{\mathcal{U}}[l_1] \equiv \mathrm{arg} \min_{l_2\in \mathcal{U}} \|l_1-l_2\|$, and step-size $0<\eta<1$. To constrain the obtained solutions in the feasible region set by $\mathcal{U}$, we turn to the projected gradient descent method for updating the solution $w_{k+1}$ at iteration $k+1$ and solving the optimization problem $\mathcal{P}_1$ with constraints.\\
    \textbf{Step $3$: Update the attack signal $\theta_{k+1}$}
    \begin{align}\label{add13}
    \theta_{k+1}=w_{k+1} + \delta v_{k+1}.
    \end{align}
    Finally, the attack signal $\theta_{k+1}$ can be obtained by perturbing the solution $w_{k+1}$ with the probing signal $\delta v_{k+1}$.

     \subsection{Performance analysis}
    Let $\Phi(\theta_k) \triangleq \Phi(\theta_k,h(u_k,\theta_k))$. We use the optimality gap, i.e.,
    \begin{align}\label{12}
    \frac{1}{T} \sum_{k=1}^{T} \mathbb{E}_{v_{[T]}}[\Phi(\theta_k) - \Phi(\theta_k^*) ]
    \end{align}
    to measure the optimality of the proposed attack strategy at $\theta_k$ where $\theta^*_{k}$ is the optimal solution at iteration $k$ and $\mathbb{E}_{v_{[k]}}$ is the expectation with respect to $v_{[k]}$ where $v_{[k]} \triangleq (v_1,\ldots,v_k)$.
    
    Before we characterize (\ref{12}), we provide the upper bounds of $\|w_{k+1}-w_{k+1}^{*}\|^2$ and $\|w_{k+1}-w_k\|^2$, and some supporting lemmas for auxiliary analysis. We have
    \begin{align}\label{aa}
    \|w_{k+1}-w_{k+1}^{*}\|^2 =&\|\Pi_{\mathcal{U}}[w_k - \eta \tilde{\phi}_k]-w_{k+1}^{*}\|^2 \nonumber\\
    \overset{(s.1)}{\leq} &  \|w_k - \eta \tilde{\phi}_k-w_{k+1}^{*}\|^2 \nonumber\\
    \overset{(s.2)}{\leq} & 2 \|w_k-w_{k+1}^*\|^2 + 2 \eta^{2} \|\tilde{\phi}_k\|^2,
    \end{align}
    where $(s.1)$ follows from the projection property \cite[Lemma 2.4]{jain2017non} and \cite{nedic2018distributed}, i.e., for any $l_1\in \mathbb{R}^{p}$ and all $l_2\in \mathcal{U}$, we have $\|\Pi_{\mathcal{U}}[l_1]-l_2\|\leq \|l_1-l_2\|$, and $(s.2)$ follows the fact that $\|a-b\|^2 \leq 2(\|a\|^2 + \|b\|^2)$. Similarly, we have
    \begin{align}\label{a}
    \|w_{k+1}-w_k\|^2=&\|\Pi_{\mathcal{U}}[w_k - \eta \tilde{\phi}_k]-w_k\|^2 \nonumber \\   
    \leq &\|w_k - \eta \tilde{\phi}_k-w_k\|^2 \nonumber\\
    \leq & \eta^{2} \|\tilde{\phi}_k\|^2.
    \end{align}
    
    Note that we replace the steady output value $h(u_k,\theta_k)$ with the real-time output value $y^{a}_{k+1}$ to enter the closed-loop zeroth-order feedback optimization framework. It is unavoidable for the system to produce the error $e_{\Phi}(x^a_k,\theta_k)$, which is shown as 
    \begin{align}\label{add15}
    e_{\Phi}(x^a_k,\theta_k)=\Phi(\theta_k,y^a_{k+1})-\Phi(\theta_k,h(u_k,\theta_k)).
    \end{align}
    
   To derive the optimality gap \eqref{12}, we first analyze the upper bound of the error $e_{\Phi}(x^a_k,\theta_k)$ and recursive inequalities of two critical variables, i.e., $\mathbb{E}_{v_{[k]}}[V(x^a_k,u_k,\theta_k)]$ and $\mathbb{E}_{v_{[k]}}[\|\tilde{\phi}_k\|^2]$. 
    
    \begin{lemma}\label{l1}
    	If Assumptions \ref{ass1}, \ref{ass3}, \ref{ass2}, and \ref{ass4} hold, then we have
    	\begin{align}\label{14}
    	|e_{\Phi}(x^a_k,\theta_k)|^2\leq \frac{\mu M_{y}^2 M_{g}^2}{2 \alpha_2} V(x^a_k,u_k,\theta_k).
    	\end{align}
    \end{lemma}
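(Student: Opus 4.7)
The plan is to prove this in two stages: first reduce the objective-function error $e_\Phi$ to a state-space distance via the two Lipschitz constants, and then control that state-space distance using the Lyapunov conditions (3)--(4).

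For the first stage, I would start from the definition of $e_\Phi(x^a_k,\theta_k)$ in \eqref{add15} and apply Assumption \ref{ass4}, which gives $M_y$-Lipschitzness of $\Phi$ in its second argument. This yields
\begin{equation*}
|e_\Phi(x^a_k,\theta_k)| \leq M_y \|y^a_{k+1}-h(u_k,\theta_k)\| = M_y \|g(x^a_{k+1})-g(x^a_{ss}(u_k,\theta_k))\|.
\end{equation*}
Then I would invoke the $M_g$-Lipschitz property of $g$ from Lemma \ref{addass4} to conclude
\begin{equation*}
|e_\Phi(x^a_k,\theta_k)|^2 \leq M_y^2 M_g^2 \,\|x^a_{k+1}-x^a_{ss}(u_k,\theta_k)\|^2.
\end{equation*}

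The second stage is the technical core: bound $\|x^a_{k+1}-x^a_{ss}(u_k,\theta_k)\|^2$ by $V(x^a_k,u_k,\theta_k)$ up to the factor $\mu/(2\alpha_2)$. Interpreting \eqref{4} at $x^a_k$ (i.e.\ reading the decrease inequality as $V(f'(x^a_k,u_k,\theta_k),u_k,\theta_k)-V(x^a_k,u_k,\theta_k)\leq -\alpha_3\|x^a_k-x^a_{ss}(u_k,\theta_k)\|^2$, with $f'(x^a_k,u_k,\theta_k)=x^a_{k+1}$), combined with the lower bound in \eqref{3} applied at $x^a_{k+1}$, gives
\begin{equation*}
\alpha_1\|x^a_{k+1}-x^a_{ss}(u_k,\theta_k)\|^2 \leq V(x^a_{k+1},u_k,\theta_k)\leq V(x^a_k,u_k,\theta_k)-\alpha_3\|x^a_k-x^a_{ss}(u_k,\theta_k)\|^2.
\end{equation*}
I would then use the upper bound in \eqref{3} in the form $-\|x^a_k-x^a_{ss}(u_k,\theta_k)\|^2\leq -V(x^a_k,u_k,\theta_k)/\alpha_2$ to eliminate the state norm on the right-hand side, obtaining
\begin{equation*}
\alpha_1\|x^a_{k+1}-x^a_{ss}(u_k,\theta_k)\|^2 \leq \Bigl(1-\tfrac{\alpha_3}{\alpha_2}\Bigr)V(x^a_k,u_k,\theta_k).
\end{equation*}
Dividing by $\alpha_1$ and recognizing that $\tfrac{1}{\alpha_1}(1-\alpha_3/\alpha_2)=\mu/(2\alpha_2)$ by the definition \eqref{a6} of $\mu$ yields exactly the bound $\|x^a_{k+1}-x^a_{ss}(u_k,\theta_k)\|^2\leq \tfrac{\mu}{2\alpha_2}V(x^a_k,u_k,\theta_k)$.

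Combining the two stages gives the claim \eqref{14}. The only real obstacle is parsing \eqref{4} correctly: as written the left-hand side contains $f'(x^a_{ss},u,\theta)$, which by the fixed-point property would make the increment trivially zero; the intended reading is the standard Lyapunov decrease at the running state $x^a_k$, and once this is clarified the rest is a two-step chain of Lipschitz bounds and an algebraic substitution using the definition of $\mu$.
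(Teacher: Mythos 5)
Your proposal is correct and follows essentially the same route as the paper's own proof: first the chain of $M_y$- and $M_g$-Lipschitz bounds reducing $|e_\Phi|^2$ to $M_y^2M_g^2\|x^a_{k+1}-x^a_{ss}(u_k,\theta_k)\|^2$, then the Lyapunov sandwich using \eqref{3} and the decrease condition \eqref{4} read at the running state $x^a_k$, and finally the identification $\tfrac{1}{\alpha_1}(1-\alpha_3/\alpha_2)=\mu/(2\alpha_2)$. Your remark about the notational slip in \eqref{4} (the argument of $f'$ should be the running state, not $x^a_{ss}$) matches how the paper itself applies the inequality in its appendix.
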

\begin{lemma}\label{l2}
	If Assumptions \ref{ass1}, \ref{ass3}, \ref{ass2}, and \ref{ass4} hold, with (\ref{add11}), (\ref{add12}) and (\ref{add13}), then we have
	\begin{align}\label{add17}
	\mathbb{E}_{v_{[k]}}&[V(x^a_k,u_k,\theta_k)] \nonumber\\
	&\leq \mu \mathbb{E}_{v_{[k]}}[V(x^a_{k-1},u_{k-1},\theta_{k-1})] \nonumber \\
	&+ 4 \alpha_2 \eta^2 M_{x}^{2} \mathbb{E}_{v_{[k]}}[\|\tilde{\phi}_{k-1}\|^2] + 16 \alpha_2 \delta^2 M_{x}^{2}.
	\end{align}
\end{lemma}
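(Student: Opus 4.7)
The plan is to bound the Lyapunov value at iteration $k$, $V_k \triangleq V(x^a_k,u_k,\theta_k)$, by peeling off the dependence on the current parameters $(u_k,\theta_k)$ and re-centering the computation around the previous steady state $x^a_{ss}(u_{k-1},\theta_{k-1})$, which is where the one-step descent inequality (\ref{4}) naturally applies. First I would invoke the upper bound in (\ref{3}) and apply $\|a+b\|^2\leq 2\|a\|^2+2\|b\|^2$ around $x^a_{ss}(u_{k-1},\theta_{k-1})$ to obtain a split of the form $V_k \leq 2\alpha_2\|x^a_k-x^a_{ss}(u_{k-1},\theta_{k-1})\|^2 + 2\alpha_2\|x^a_{ss}(u_{k-1},\theta_{k-1})-x^a_{ss}(u_k,\theta_k)\|^2$, which cleanly isolates a ``pure decay'' term from a ``steady-state drift'' term.

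For the decay term, since $x^a_k=f'(x^a_{k-1},u_{k-1},\theta_{k-1})$, the descent inequality (\ref{4}) combined with the lower bound in (\ref{3}) (both evaluated at the previous parameters) gives $V(x^a_k,u_{k-1},\theta_{k-1})\leq (1-\alpha_3/\alpha_2)V_{k-1}$. Applying the lower bound of (\ref{3}) once more then yields $\|x^a_k-x^a_{ss}(u_{k-1},\theta_{k-1})\|^2\leq (1-\alpha_3/\alpha_2)V_{k-1}/\alpha_1$, and multiplying by $2\alpha_2$ produces exactly $\mu V_{k-1}$ with $\mu$ as defined in (\ref{a6}).

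For the drift term, Lemma \ref{addass4} gives $\|x^a_{ss}(u_{k-1},\theta_{k-1})-x^a_{ss}(u_k,\theta_k)\|\leq M_x\|\theta_k-\theta_{k-1}\|$. Using the attack construction (\ref{add13}) to expand $\theta_k-\theta_{k-1}=(w_k-w_{k-1})+\delta(v_k-v_{k-1})$, a second application of $\|a+b\|^2\leq 2\|a\|^2+2\|b\|^2$, followed by the iterate bound $\|w_k-w_{k-1}\|^2\leq \eta^2\|\tilde{\phi}_{k-1}\|^2$ already derived in (\ref{a}) and the trivial bound $\|v_k-v_{k-1}\|^2\leq 4$ (since $v_k,v_{k-1}\in\mathcal{S}_p$), gives $\|\theta_k-\theta_{k-1}\|^2\leq 2\eta^2\|\tilde{\phi}_{k-1}\|^2 + 8\delta^2$. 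Combining the two pieces yields the deterministic inequality $V_k\leq \mu V_{k-1}+4\alpha_2 M_x^2\eta^2\|\tilde{\phi}_{k-1}\|^2+16\alpha_2 M_x^2\delta^2$, and taking expectation over $v_{[k]}$ produces (\ref{add17}).

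The main conceptual hurdle is that the Lyapunov function $V(\cdot,u,\theta)$ and the steady-state map $x^a_{ss}(u,\theta)$ are parametrized by $(u,\theta)$, which differ between consecutive iterations, whereas (\ref{4}) holds only for a \emph{fixed} pair. The triangle-inequality split around $x^a_{ss}(u_{k-1},\theta_{k-1})$ is precisely the mechanism that lets us invoke (\ref{4}) at the previous parameters and then absorb the parameter mismatch into the Lipschitz constant $M_x$; once this split is in place, the remaining manipulations are mechanical.
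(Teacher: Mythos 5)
Your proposal is correct and follows essentially the same route as the paper's own proof: the same triangle-inequality split of $\|x^a_k-x^a_{ss}(u_k,\theta_k)\|^2$ around the previous steady state $x^a_{ss}(u_{k-1},\theta_{k-1})$, the same one-step contraction argument (the paper's inequality (\ref{a2}) shifted by one index) yielding the factor $\mu$, and the same Lipschitz-plus-$\|\theta_k-\theta_{k-1}\|^2$ bound giving the $4\alpha_2\eta^2M_x^2$ and $16\alpha_2\delta^2M_x^2$ terms. No gaps.
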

 \begin{lemma}\label{l3}
	If Assumptions \ref{ass1}, \ref{ass3}, \ref{ass2}, and \ref{ass4} hold, with (\ref{add11}), (\ref{add12}), and (\ref{add13}), then we have
	\begin{align}\label{add18}
	\mathbb{E}_{v_{[k]}}[\|\tilde{\phi}_k\|^2] &\leq \frac{6 \eta^2 p^2 M^2}{\delta^2} \mathbb{E}_{v_{[k]}}[\|\tilde{\phi}_{k-1}\|^2]+24p^2M^2 \nonumber \\
	&+ \frac{3 \mu p^2 M_{y}^{2} M_{g}^{2} }{2 \alpha_2 \delta^2} (\mathbb{E}_{v_{[k]}}[V(x^a_k,u_k,\theta_{k})] \nonumber\\
	&+ \mathbb{E}_{v_{[k]}}[V(x^a_{k-1},u_{k-1},\theta_{k-1})]).
	\end{align}
\end{lemma}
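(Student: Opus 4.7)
The plan is to reduce the bound on $\mathbb{E}_{v_{[k]}}[\|\tilde{\phi}_k\|^2]$ to three pieces by inserting the steady-state output into the difference appearing inside $\tilde{\phi}_k$. Since $v_k$ lies on the unit sphere, I would first observe that
\begin{equation*}
\|\tilde{\phi}_k\|^2=\frac{p^2}{\delta^2}\bigl[\Phi(\theta_k,y^a_{k+1})-\Phi(\theta_{k-1},y^a_k)\bigr]^2,
\end{equation*}
and then add and subtract $\Phi(\theta_k,h(u_k,\theta_k))$ and $\Phi(\theta_{k-1},h(u_{k-1},\theta_{k-1}))$ to rewrite the bracket as $e_\Phi(x^a_k,\theta_k)+[\Phi(\theta_k)-\Phi(\theta_{k-1})]-e_\Phi(x^a_{k-1},\theta_{k-1})$, using the definition (\ref{add15}) and the shorthand $\Phi(\theta)\triangleq\Phi(\theta,h(u,\theta))$. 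Applying $(a+b+c)^2\le 3(a^2+b^2+c^2)$ then splits the bound into one ``algorithmic'' term involving $[\Phi(\theta_k)-\Phi(\theta_{k-1})]^2$ and two ``tracking-error'' terms.

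For the algorithmic term, I would invoke Assumption \ref{ass4} to get $[\Phi(\theta_k)-\Phi(\theta_{k-1})]^2\le M^2\|\theta_k-\theta_{k-1}\|^2$. From the update rule (\ref{add13}), $\theta_k-\theta_{k-1}=(w_k-w_{k-1})+\delta(v_k-v_{k-1})$, so the basic inequality $\|a+b\|^2\le 2\|a\|^2+2\|b\|^2$ together with $\|v_k-v_{k-1}\|\le 2$ and the already-derived bound (\ref{a}) yields
\begin{equation*}
\|\theta_k-\theta_{k-1}\|^2\le 2\eta^2\|\tilde{\phi}_{k-1}\|^2+8\delta^2.
\end{equation*}
Multiplying by $3p^2M^2/\delta^2$ produces exactly the desired coefficients $6\eta^2p^2M^2/\delta^2$ in front of $\|\tilde{\phi}_{k-1}\|^2$ and $24p^2M^2$ in the absolute constant.

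For the two tracking-error terms $e_\Phi(x^a_k,\theta_k)^2$ and $e_\Phi(x^a_{k-1},\theta_{k-1})^2$, I would directly apply Lemma \ref{l1}, which converts each into $\tfrac{\mu M_y^2 M_g^2}{2\alpha_2}V(x^a_\cdot,u_\cdot,\theta_\cdot)$. After the factor $3p^2/\delta^2$ is absorbed, these produce the term $\tfrac{3\mu p^2 M_y^2 M_g^2}{2\alpha_2\delta^2}(V(x^a_k,u_k,\theta_k)+V(x^a_{k-1},u_{k-1},\theta_{k-1}))$. Taking $\mathbb{E}_{v_{[k]}}[\cdot]$ of the entire inequality closes the argument.

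The only slightly delicate point is the bookkeeping around the probing perturbation: one must make sure to pass from the iterate $w$ (which enjoys the projection contraction (\ref{a})) to the attack signal $\theta=w+\delta v$ before invoking $M$-Lipschitzness of $\Phi$. Every other step is an application of lemmas already at hand, so I do not anticipate a substantive obstacle beyond ensuring that the constants $3$, $2$, and $4$ combine exactly into $6$ and $24$ as stated.
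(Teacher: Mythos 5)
Your proposal is correct and follows essentially the same route as the paper's proof in Appendix C: the same insertion of the steady-state evaluations to expose $e_\Phi$, the same $(a+b+c)^2\le 3(a^2+b^2+c^2)$ split, the bound $\|\theta_k-\theta_{k-1}\|^2\le 2\eta^2\|\tilde{\phi}_{k-1}\|^2+8\delta^2$ for the algorithmic term, and Lemma \ref{l1} for the two tracking-error terms, yielding identical constants. The only cosmetic difference is that the paper splits $\Phi(\theta_k)-\Phi(\theta_{k-1})$ by inserting the intermediate point $\Phi(w_{k-1}+\delta v_k)$ before invoking Lipschitzness, whereas you apply Lipschitzness in $\theta$ first and then decompose $\theta_k-\theta_{k-1}$; the two are numerically equivalent.
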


The proofs of  Lemmas \ref{l1}, \ref{l2} and \ref{l3} are shown in Appendix \ref{A}, \ref{B} and \ref{C}, respectively. Lemma \ref{l1} quantifies the close relationship between $\Phi(\theta_k,y^a_{k+1})$ and $\Phi(\theta_k,h(u_k,\theta_k))$.  
Lemma \ref{l2} measures the proximity of the current state $x^a_k$ compared with the steady state $x^a_{ss}(u_k,\theta_k)$.   
    Lemma \ref{l3} reflects the first order smoothness of the objective function evaluation $\Phi(\theta_k,y^a_{k+1})$ at the solution $w_k$.

    Next, we provide the following theorem to characterize the optimality of the obtained solutions. Note that $\Phi(\theta_k) \triangleq \Phi(\theta_k,h(u_k,\theta_k))$. For simplicity, all the complexity results in this paper are presented in $\mathcal{O}$ notations.

    \begin{theorem}\label{optimality-gap}
	Supposing that Assumptions \ref{ass1}, \ref{ass3}, \ref{ass2}, and \ref{ass4} hold, for any given precision $\epsilon>0$ such that $|\Phi_{\delta}(\theta)-\Phi(\theta)|\leq \epsilon$, let $\delta=\frac{\epsilon}{M}$ and $\eta=\frac{\kappa \epsilon}{pT}$ with $0<\kappa<\kappa^*$, where
	\begin{align*}
	\kappa^*=\mathcal{O} \left(\min \left\{ \frac{T\sqrt{\mu(1+\mu)}}{\mu}, \frac{(1-\mu)T}{\sqrt{\mu(1+\mu)}} \right\} \right),
	\end{align*} 		
	then we have
	\begin{align}\label{20}
	\frac{1}{T} \sum_{k=1}^{T} &\mathbb{E}_{v_{[T]}}[\Phi(\theta_k) - \Phi(\theta_k^*) ] \nonumber\\
	&= \mathcal{O} \left(\frac{p^2(1+\mu)(1+\sqrt{1+\mu})}{(1-\rho)T^2} + \frac{\mu p^2}{T}\right),
	\end{align} where $\rho\in (0,1)$ is the maximum eigenvalue of matrix $P$ given by 
 \begin{align*}
     P=\left[  \begin{array}{cc}
     	p_{11} & \sqrt{p_{12}p_{21}}\\
     	\sqrt{p_{12}p_{21}} & p_{22}
     	\end{array}
     	\right]
 \end{align*} with
     	\begin{align*}
     	p_{11}=& \frac{6p^2 \eta^2}{\delta^2}(M^2 + \mu M_x^{2} M_y^{2} M_g^{2}), \quad p_{22}= \mu,\nonumber\\
     	p_{12}=& \frac{3\mu p^2 M_y^{2} M_g^{2}}{2 \alpha_2 \delta^2}(1+\mu), \quad p_{21}= 4\alpha_2 \eta^2 M_x^2, \nonumber \\
     	d_1=& 24p^2(M^2 + \mu M_x^{2} M_y^{2} M_g^{2}), \quad
     	d_2= 16\alpha_2 \delta^2 M_x^2.
     	\end{align*}
	Moreover, it also holds that
	\begin{align*}
	\rho=\mathcal{O} \left(\max \left\{ \frac{(1-\mu)^2}{1+\mu}, \mu\right\} +1-\mu\right).
	\end{align*} 
	
\end{theorem}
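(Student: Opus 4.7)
My plan is to work with the smoothed surrogate $\Phi_\delta$ as the primary analytic object, since \eqref{10c} shows it is $\frac{Mp}{\delta}$-smooth while $\Phi$ is merely Lipschitz. Starting from the projected-gradient bound \eqref{aa}, I would take the conditional expectation with respect to $v_{[k-1]}$ and decompose $\mathbb{E}_{v_k}[\tilde\phi_k]=\nabla\Phi_\delta(w_k)+\xi_k$, where the bias $\xi_k$ stems from the output-to-steady-state mismatch $e_\Phi(x^a_k,\theta_k)$. Together with the unbiasedness identity \eqref{10a}, this converts the inner product $\langle\tilde\phi_k,w_k-w_k^*\rangle$ into a descent quantity tied to $\Phi_\delta(w_k)-\Phi_\delta(w_k^*)$ plus a residual controlled by $\mathbb{E}\,|e_\Phi|$, which Lemma \ref{l1} bounds in turn through $V(x^a_k,u_k,\theta_k)$.

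\textbf{Coupling the two recursions.} Lemmas \ref{l2} and \ref{l3} give cross-coupled linear recursions for the pair $s_k=(\mathbb{E}[\|\tilde\phi_k\|^2],\,\mathbb{E}[V(x^a_k,u_k,\theta_k)])^\top$. My plan is to rewrite them as $s_{k+1}\preceq \tilde P s_k + d$ with an asymmetric nonnegative $\tilde P$ whose off-diagonal entries are exactly $p_{12}$ and $p_{21}$, and then apply the diagonal similarity transform $D=\mathrm{diag}(\sqrt{p_{12}/p_{21}},1)$ to replace $\tilde P$ by the symmetric matrix $P$ of the theorem with the same spectral radius $\rho$; this is why the geometric mean $\sqrt{p_{12}p_{21}}$ appears. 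Iterating produces $s_k\preceq P^k s_0 + (I-P)^{-1}d$, and, under $\rho<1$, uniform-in-$k$ bounds of order $\mathcal{O}(1/(1-\rho))$ on both the variance $\mathbb{E}[\|\tilde\phi_k\|^2]$ and the Lyapunov quantity $\mathbb{E}[V(x^a_k,u_k,\theta_k)]$.

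\textbf{Averaging and parameter calibration.} Summing the per-iteration inequality from the first step over $k=1,\dots,T$ telescopes the $\|w_k-w_k^*\|^2$ terms and yields
\[
\tfrac{1}{T}\sum_{k=1}^{T}\mathbb{E}[\Phi_\delta(w_k)-\Phi_\delta(w_k^*)] \le \tfrac{D_0}{\eta T} + \eta\,\sup_k\mathbb{E}[\|\tilde\phi_k\|^2] + B(\mu,\delta),
\]
where $D_0$ absorbs the initial gap and $B$ the bias from $\mathbb{E}|e_\Phi|$. Converting this average over $w_k$ into one over $\theta_k=w_k+\delta v_k$ via the $M$-Lipschitzness of $\Phi$ and the smoothing error $|\Phi_\delta-\Phi|\le M\delta$ from \eqref{10b} costs only $\mathcal{O}(\delta)$ per iteration. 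Substituting $\delta=\epsilon/M$ and $\eta=\kappa\epsilon/(pT)$ then balances the $D_0/(\eta T)$ and $\eta\cdot p^2/(1-\rho)$ contributions and reproduces the two summands $\mathcal{O}\bigl(p^2(1+\mu)(1+\sqrt{1+\mu})/((1-\rho)T^2)\bigr)$ and $\mathcal{O}(\mu p^2/T)$ announced in \eqref{20}.

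\textbf{Main obstacle.} The crux is the spectral analysis of $P$: requiring both roots of $\lambda^2-(p_{11}+p_{22})\lambda+(p_{11}p_{22}-p_{12}p_{21})$ to lie inside the unit disk translates into a quadratic inequality in $\eta^2/\delta^2$, hence in $\kappa$, whose two admissibility branches yield $\kappa^*=\mathcal{O}(\min\{T\sqrt{\mu(1+\mu)}/\mu,\,(1-\mu)T/\sqrt{\mu(1+\mu)}\})$. Extracting the sharp asymptotic $\rho=\mathcal{O}(\max\{(1-\mu)^2/(1+\mu),\mu\}+1-\mu)$ then demands separate treatment of the regimes $\mu\to 0$ and $\mu\to 1$, since the prefactor $1/(1-\rho)$ must remain moderate enough not to spoil the $1/T^2$ rate in \eqref{20}. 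All the remaining pieces are standard biased zeroth-order stochastic gradient arguments, so this spectral bookkeeping is the principal technical difficulty I would have to overcome.
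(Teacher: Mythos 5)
Your treatment of the coupled recursions is exactly what the paper does: Lemmas \ref{l2} and \ref{l3} are stacked into a vector inequality, the Lyapunov component is rescaled by $\sqrt{p_{12}/p_{21}}$ to symmetrize the coupling matrix into the $P$ of the statement, and \cite[Lemma 11]{he2022model} is invoked to bound the partial sums by $\mathcal{O}\bigl(T(d_1+\sqrt{p_{12}/p_{21}}\,d_2)/(1-\rho)\bigr)$; the admissibility analysis of $\kappa$ via the characteristic polynomial of $P$ also matches. The genuine divergence — and the gap — is in the per-iteration argument. You propose the standard biased-SGD descent inequality: expand $\|w_{k+1}-w_k^*\|^2$, use $\mathbb{E}_{v_k}[\tilde\phi_k]=\nabla\Phi_\delta(w_k)+\xi_k$ and convexity, and telescope, arriving at a bound of the form $\tfrac{D_0}{\eta T}+\eta\sup_k\mathbb{E}[\|\tilde\phi_k\|^2]+B$. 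With the step size prescribed by the theorem, $\eta=\kappa\epsilon/(pT)$, the telescoped term is $D_0/(\eta T)=D_0 p/(\kappa\epsilon)$, which is $\Theta(1)$ in $T$ (indeed $\mathcal{O}(p)$), so it cannot be absorbed into the claimed $\mathcal{O}\bigl(p^2/((1-\rho)T^2)+\mu p^2/T\bigr)$ rate; the "balancing" you describe in the calibration step does not actually occur. A secondary problem with telescoping is that the comparator $\theta_k^*$ (hence $w_k^*$) is indexed by $k$, so $\sum_k(\|w_k-w_k^*\|^2-\|w_{k+1}-w_{k+1}^*\|^2)$ does not collapse without an additional bound on the drift $\|w_k^*-w_{k+1}^*\|$, which you do not supply.

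The paper avoids both issues by a different (and admittedly non-standard) route: it never telescopes. Instead it writes the $\tfrac{Mp}{\delta}$-smoothness upper quadratic of $\Phi_\delta$ \emph{at the optimizer} $\theta_k^*$, bounds the inner product $\nabla\Phi_\delta(\theta_k^*)^{\mathrm T}(\theta_k-\theta_k^*)$ by $\tfrac12(\|\nabla\Phi_\delta(\theta_k^*)\|^2+\|\theta_k-\theta_k^*\|^2)$, relates $\|\nabla\Phi_\delta(\theta_k^*)\|^2$ back to $\|\nabla\Phi_\delta(w_k)\|^2\le 4\,\mathbb{E}[\|\tilde\phi_k\|^2]$, and controls $\|w_k-w_k^*\|^2$ through \eqref{aa}, \eqref{a} and the inequality $\|w_{k-1}-w_k^*\|\le\|w_{k-1}-w_k\|$, so that every term in the per-iteration bound is proportional to $\mathbb{E}[\|\tilde\phi_{k-1}\|^2]$ or $\mathbb{E}[V(\cdot)]$ — quantities your recursion machinery then sums. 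No initial-distance term $D_0/(\eta T)$ ever appears. If you want to salvage your route you would need either a step size $\eta\sim 1/\sqrt{T}$ (changing the claimed rate) or an argument that eliminates the initial-condition term, i.e., essentially the paper's substitution of the quadratic-growth/Cauchy--Schwarz step for the telescoping step.
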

\begin{proof}
	Please see Appendix \ref{E}.
\end{proof}

	Theorem \ref{optimality-gap} shows that the optimality gap is related to the dimension $p$ of the attack signal, the convergence rate $\mu$ of the system, and the iterations $T$. As the iterations $T$ increase gradually, the optimality gap decreases and it can even decay to zero as long as $T$ is large enough. 
	
	\section{Noise effects on model-free attack design}\label{secIV}
	In this part, we further explore the effects of internal inherent noises on the proposed attack strategy and derive the optimality of solutions.
	\subsection{Problem reformulation}
 With noise $d_k$, the original system \eqref{6} can be rewritten as 
	\begin{align}\label{noise_attack_system}
	    \begin{split}
	   x_{k+1}^{a}=&f(x_k^{a},u_k,d_k) + \Gamma \theta_{k}, \\	    
	   y_{k}^{a,d}=&g(x_k^{a},d_k),
	    \end{split}
	\end{align}
	where the injected false data $\theta_k$ satisfies Assumption \ref{ass3} and the internal inherent noise $d_k\in \mathbb{R}^{r}$ is independent of the state $x_k$ and $\theta_k$ statistically. Herein, we consider the additive noise $d_k$, such as $f(x_k^{a},u_k,d_k)=f(x_k^{a},u_k)+d_k$. Similar to Assumption \ref{ass1}, the above discrete-time system is stable with noise $d_k$ before the invasion of attacks, which can be guaranteed by \cite[Theorem 2.2]{deng2001stabilization} if $d_k$ follows a standard Wiener process, i.e., the stochastic noise has zero mean and time-varying covariance. Let $\Phi(\theta,y^{a},d)=\|y^{a,d}-\bar{y}\|+\theta^{\mathrm{T}} Q \theta$. In this case, the optimization problem becomes
	\begin{align}\label{problem1}
\mathbf{\mathcal{P}_2}: \quad &\mathrm{min}_{\theta} ~\mathbb{E}_{d} [\Phi(\theta,y^{a},d)] \\
&\mathrm{s.t.}~  y^{a,d}=h(u,\theta,d), \nonumber \\
& \quad ~~ \theta^{\mathrm{T}} \theta \leq R, \nonumber
\end{align}
where $y^{a,d}=h(u,\theta,d)$ is the steady-state map under attacks in \eqref{noise_attack_system} to guarantee the stability of the compromised system. Let $\tilde{\Phi}(\theta) \triangleq \mathbb{E}_{d}[\Phi(\theta,y^{a},d)]$.
Then, we provide the following assumptions for the objective function $\Phi(\theta,y^{a},d)$.
\begin{assumption}\label{ass7}
For any $\theta\in \mathbb{R}^{p}$, there exists $\sigma>0$ such that
\begin{align}
    \mathbb{E}_{d}[(\Phi(\theta,y^{a},d)-\tilde{\Phi}(\theta))^2]\leq \sigma^2.
\end{align}
\end{assumption}
\begin{assumption}\label{ass8}
    The function $\Phi(\theta,y^{a},d)$ is $M(d)$-Lipschitz with respect to $\theta$, $M_y(d)$-Lipschitz with respect to $y^{a,d}$, and $\inf_{\theta,y^{a},d}\Phi(\theta,y^{a},d)>-\infty$. Moreover, we have $M(d)\leq M$ and $M_y(d)\leq M_y$.
\end{assumption}
Assumption \ref{ass7} provides a bounded variance of the objective function in the stochastic setting, which also implies that $\mathbb{E}_d [(\Phi(\theta,y^a,d_{01})-\Phi(\theta,y^a,d_{02}))^2]\leq 4\sigma^2$ \cite{zhang2022new}. In Assumption \ref{ass8}, the Lipschitz constants in the noisy system are constrained to be not larger than that in the noiseless system.
	
Moreover, the following lemma reveals that the compromised system \eqref{noise_attack_system} can still be stable in spite of the process and measurement noises and the noises do not influence the Lipschitz constant of the steady-state map.
	\begin{lemma}\label{l6}
	For the compromised system \eqref{noise_attack_system}, there exists a unique steady-state map $x^a_{ss}: \mathbb{R}^{m}\times  \mathbb{R}^{p} \times \mathbb{R}^{r}  \rightarrow \mathbb{R}^{n}$ such that $f'(x^a_{ss}(u,\theta,d),u,\theta,d) \triangleq f(x^a_{ss}(u,\theta,d),u,d)+ \Gamma \theta=x^a_{ss}(u,\theta,d)$ for any $\theta$. In addition, $x_{ss}^{a}(u,\theta,d)$ is $M_x$-Lipschitz with respect to $\theta$, and the function $g(x^a,d)$ is $M_g$-Lipschitz with respect to $x^a$.
	\end{lemma}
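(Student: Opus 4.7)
The plan is to closely mirror the argument of Lemma~\ref{addass4}, exploiting the additive noise structure $f(x^a, u, d) = f(x^a, u) + d$ assumed for the noisy dynamics in \eqref{noise_attack_system}. This additive structure is the key: the noise realization $d$ enters the steady-state equation only as a translation, so it can be absorbed into the constant forcing term and leaves every Jacobian computation unchanged.

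First, I would establish existence and uniqueness of $x^a_{ss}(u,\theta,d)$ by applying the implicit function theorem \cite[Theorem 1B.1]{dontchev2009implicit} to the map $F(x;u,\theta,d) \triangleq f(x,u) + d + \Gamma\theta - x$, treating $(u,\theta,d)$ as parameters and $x$ as the unknown. Since $\partial F/\partial x = \partial f/\partial x - I$ depends on neither $d$ nor $\theta$, the invertibility condition that ensured existence of the noiseless steady-state map in Lemma~\ref{addass4} continues to hold, yielding a unique smooth map $x_{ss}^{a}(u,\theta,d)$ satisfying the stated fixed-point identity. Stability of the compromised system under the stochastic perturbation $d_k$ is inherited from the Wiener-process discussion invoking \cite[Theorem 2.2]{deng2001stabilization}, which legitimizes the steady-state analysis.

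Next, to obtain the Lipschitz bound in $\theta$, I would implicitly differentiate the steady-state identity to obtain $\partial x^a_{ss}/\partial \theta = (I - \partial f/\partial x)^{-1}\Gamma$, which is exactly the expression that yielded the constant $M_x$ in the noiseless case; thanks to additivity, $d$ does not appear in this Jacobian, so the same constant $M_x$ applies uniformly over all realizations of $d$. The $M_g$-Lipschitz bound for $g(x^a, d)$ in $x^a$ would be carried over from the noiseless statement of Lemma~\ref{addass4} together with the uniform bound $M_y(d)\le M_y$ and $M(d)\le M$ postulated in Assumption~\ref{ass8}, which explicitly forbids the Lipschitz constants from inflating under noise.

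The main obstacle I anticipate is ensuring that all constants remain uniform across noise realizations: if $d$ entered through the Jacobian $\partial f/\partial x$ rather than additively, then the derivative of the steady-state map would become noise-dependent and $M_x$ would in general inflate or even become unbounded. The additive modeling assumption, together with the uniform-Lipschitz clause in Assumption~\ref{ass8}, is precisely what circumvents this difficulty and allows the noiseless constants to transfer verbatim to the stochastic setting.
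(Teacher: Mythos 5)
Your proposal is correct and follows essentially the same route as the paper: the paper likewise establishes existence via a steady-state/Lyapunov argument carried over from the noiseless case and then obtains the Lipschitz constants by implicitly differentiating the fixed-point identity, using the additive structure of $d$ to conclude that $\partial F/\partial\theta$ and $\partial F/\partial x^a_{ss}$ coincide with their noiseless counterparts, so $M_x$ and $M_g$ transfer unchanged. The only minor difference is that the paper's Jacobian formula for $M_x$ also retains the feedback term $\frac{\partial F}{\partial u}\frac{\partial u}{\partial x^a_{ss}}$ (accounting for state-dependent $u$), which your expression $(I-\partial f/\partial x)^{-1}\Gamma$ omits, but this does not affect the argument's substance.
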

	\begin{proof}
	The proof can be divided into two parts. One is to find a Lyapunov function for guaranteeing the existence of the steady-state map. The other is to show the continuation property of the steady-state map based on the implicit function theorem \cite[Theorem 1B.1]{dontchev2009implicit}. 
	
	\textbf{Existence of the steady-state.} In the steady-state, we have
	\begin{align*}
	y^{a,d}=g(x^a_{ss}(u,\theta,d)) \triangleq h(u,\theta,d).
	\end{align*}
	Similarly, there exists the following Lyapunov function $V: \mathbb{R}^{n} \times \mathbb{R}^{m} \times \mathbb{R}^{p} \rightarrow \mathbb{R}$ and parameters $\alpha_1,\alpha_2,\alpha_3>0$ such that	
	{\small{
	\begin{align}
	c_1 \|x^a-x^a_{ss}(u,\theta,d)\|^2\leq V(x^a,u,\theta,d) \leq c_2 \|x^a-x^a_{ss}(u,\theta,d)\|^2, \label{add25} \\
	V(f'(x^a_{ss}(u,\theta,d)))-V(x^a,u,\theta,d)\leq -c_3 \|x^a-x^a_{ss}(u,\theta,d)\|^2. \label{addadd26}
	\end{align}}}
	Based on (\ref{add25}) and (\ref{addadd26}), the rate of the change in one step of the function value $V(x^a,u,\theta,d)$ is denoted as
	\begin{align}\label{a27}
	\mu^{\prime} \triangleq \frac{2c_2}{c_1} (1-\frac{c_3}{c_2}).
	\end{align}
	The stability of the compromised system can be guaranteed if $\mu^{\prime}<1$.
	
	\textbf{Continuity of the steady-state.} Let $F(x,u,\theta,d)=f(x^a_{ss}(u,\theta,d),u,d)+ \Gamma \theta-x^a_{ss}(u,\theta,d)=0$. Differentiating both sides of the above equation with respect to $\theta$ gives that
	\begin{align*}
    	    \frac{\partial{F}}{\partial{x_{ss}^{a}}} \frac{\partial{x_{ss}^{a}}}{\partial{\theta}}+ \frac{\partial{F}}{\partial{u}} \frac{\partial{u}}{\partial{x_{ss}^{a}}}\frac{\partial{x_{ss}^{a}}}{\partial{\theta}}+ \frac{\partial{F}}{\partial{\theta}} +  \frac{\partial{F}}{\partial{d}} \cdot 0=0.
	\end{align*}
	When $F(x,u,\theta,d)$ is continuously differentiable with respect to $\theta$ in the neighborhood of $(x_{ss}^{a},u,\theta,d)$ and $\frac{\partial{F}}{\partial{x_{ss}^{a}}}+\frac{\partial{F}}{\partial{u}} \frac{\partial{u}}{\partial{x_{ss}^{a}}}$ is nonsingular, $x_{ss}^a$ is the Lipschitz function with respect to $\theta$ where the Lipschitz constant $M_x$ satisfies
	\begin{align*}
	    M_x= \mathrm{sup } \left|- \frac{\frac{\partial{F}}{\partial{\theta}}}{\frac{\partial{F}}{\partial{x_{ss}^{a}}}+\frac{\partial{F}}{\partial{u}} \frac{\partial{u}}{\partial{x_{ss}^{a}}}}\right|. 
	\end{align*}
	Since we consider the additive noise from \eqref{noise_attack_system}, it can be followed that
	\begin{align*}
	    \frac{\partial{F(x,u,\theta)}}{\partial{\theta}} &= \frac{\partial{F(x,u,\theta,d)}}{\partial{\theta}},\\
	    \frac{\partial{F(x,u,\theta)}}{\partial{x_{ss}^{a}}} &=\frac{\partial{F(x,u,\theta,d)}}{\partial{x_{ss}^{a}}},
	\end{align*}
	where $F(x,u,\theta)=f(x^a_{ss}(u,\theta),u)+ \Gamma \theta-x^a_{ss}(u,\theta)=0$.  Thus, it is inferred that the existence of noise does not influence the Lipschitz continuous property of the steady-state with respect to $\theta$ and the Lipschitz constant is the same as that without noise. Similarly, for $F_y=g(x_{ss}^{a}(u,\theta,d))-y^{a,d}=0$, we have the same result. Hence, the proof is completed. 
	\end{proof}

 \begin{remark}\label{Re2}
	Lemma \ref{l6} is similar to Lemma \ref{l1} where the noise $d$ is independent of the state $x$ and the injected false data $\theta$. From Lemma \ref{l6}, we know that the process and measurement noises affect the convergence rate $\mu^{\prime}$ but not the Lipschitz constant of the steady-state map. Apparently, $\mu \neq \mu^{\prime}$. If $\mu^{\prime}>\mu$, the rate that the system converges to the steady state becomes slow (i.e., the noise reduces the convergence rate), which is shown in Fig. \ref{fig4} in Section \ref{IV}. 
	\end{remark}
 
\subsection{Attack strategy design with noise}
With the zeroth-order optimization framework, the model-free attack strategy under the discrete-time system with noise $d_k$ is designed as
\begin{align}\label{ss11}
\left\{
    \begin{aligned}
    &\tilde{\phi}_k=\frac{p v_k}{\delta} [\Phi(\theta_k,y_{k+1}^{a},d_k)- \Phi(\theta_{k-1},y_{k}^{a},d_{k-1}) ],\\
    &w_{k+1}=\Pi_{\mathcal{U}}[w_k - \eta \tilde{\phi}_k],\\
    &\theta_{k+1}=w_{k+1} + \delta v_{k+1},
    \end{aligned}
\right.
    \end{align}
    where $d_k$ and $d_{k-1}$ are independent random noises that are sampled at iterations $k$ and $k-1$, respectively. Different from \eqref{add11}, the existence of noise will also affect the objective function value. Moreover, the function value is not repeatable at different iterations and it is hard to store the noise value at each iteration for computing the function value. 
    Thus, at iteration $k$, only one evaluation is possible. In other words, compared to \eqref{add11}, it takes the residual of objective function evaluations between two consecutive stochastic feedback points.
    
 \subsection{Optimality with the general noise}
With the noise $d_k$, the following lemma provides the upper bound of $\mathbb{E}_{v_{[k]}}[V(x^a_k,u_k,\theta_k,d_k)]$ and $	\mathbb{E}_{v_{[k]}}[\|\tilde{\phi}_k\|^2]$ in this stochastic setting.
		\begin{lemma}\label{l8}
	If Assumptions \ref{ass3}, \ref{ass7} and \ref{ass8} hold, with \eqref{ss11}, we have
	\begin{align}\label{a30}
	\mathbb{E}_{v_{[k]}}&[V(x^a_k,u_k,\theta_k,d_k)] \nonumber\\
	&\leq \mu^{\prime} \mathbb{E}_{v_{[k]}}[V(x^a_{k-1},u_{k-1},\theta_{k-1},d_{k-1})]+ 32 c_2 \delta^2 M^{2}_{x} \nonumber \\
	&+ 8 c_2 \eta^2 M^{2}_{x} \mathbb{E}_{v_{[k]}}[\|\tilde{\phi}_{k-1}\|^2] + 16c_2 \sigma^2.
	\end{align}
\end{lemma}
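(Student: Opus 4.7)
The plan is to mirror the proof of Lemma \ref{l2}, replacing the deterministic Lyapunov parameters $\alpha_1,\alpha_2,\alpha_3$ by their noisy analogues $c_1,c_2,c_3$ and carefully accounting for the extra randomness introduced by the noise samples $d_{k-1},d_k$. Starting from the upper bound $V(x^a_k,u_k,\theta_k,d_k)\leq c_2\|x^a_k-x^a_{ss}(u_k,\theta_k,d_k)\|^2$ in (\ref{add25}), I will insert the previous steady state $x^a_{ss}(u_{k-1},\theta_{k-1},d_{k-1})$ via the triangle inequality together with $(a+b)^2\leq 2(a^2+b^2)$, which splits the deviation into a contraction piece and a drift piece.

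For the contraction piece $\|x^a_k-x^a_{ss}(u_{k-1},\theta_{k-1},d_{k-1})\|^2$, I will use $x^a_k=f'(x^a_{k-1},u_{k-1},\theta_{k-1},d_{k-1})$ together with the one-step descent (\ref{addadd26}) and the quadratic sandwich (\ref{add25}) to recover the geometric rate $\mu'=\tfrac{2c_2}{c_1}(1-\tfrac{c_3}{c_2})$ defined in (\ref{a27}), producing the term $\mu'\,\mathbb{E}_{v_{[k]}}[V(x^a_{k-1},u_{k-1},\theta_{k-1},d_{k-1})]$. For the drift piece $\|x^a_{ss}(u_{k-1},\theta_{k-1},d_{k-1})-x^a_{ss}(u_k,\theta_k,d_k)\|^2$, I will further insert the intermediate steady state $x^a_{ss}(u_k,\theta_k,d_{k-1})$ (same noise realization, updated decision) and apply $(a+b)^2\leq 2(a^2+b^2)$ a second time; this second factor of two is precisely what doubles the coefficients from $4\alpha_2,16\alpha_2$ in Lemma \ref{l2} to $8c_2,32c_2$ here. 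The Lipschitz-in-$\theta$ subpiece is bounded by $M_x^2\|\theta_k-\theta_{k-1}\|^2$ via Lemma \ref{l6}, and the update rule (\ref{ss11}) combined with the non-expansiveness of $\Pi_{\mathcal{U}}$ and $\|v_k-v_{k-1}\|^2\leq 4$ yields $\|\theta_k-\theta_{k-1}\|^2\leq 2\eta^2\|\tilde{\phi}_{k-1}\|^2+8\delta^2$, producing the stated $8c_2\eta^2 M_x^2\,\mathbb{E}_{v_{[k]}}[\|\tilde{\phi}_{k-1}\|^2]+32c_2\delta^2 M_x^2$ contribution.

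The remaining noise-driven subpiece $\mathbb{E}\|x^a_{ss}(u_k,\theta_k,d_{k-1})-x^a_{ss}(u_k,\theta_k,d_k)\|^2$ contributes the $16c_2\sigma^2$ term and is the main obstacle, since Lemma \ref{l6} only gives Lipschitz continuity of $x^a_{ss}$ with respect to $\theta$, not $d$. The plan is to exploit the additive structure of the noise in (\ref{noise_attack_system}) together with Assumption \ref{ass7} and its consequence $\mathbb{E}_d[(\Phi(\theta,y^a,d_{01})-\Phi(\theta,y^a,d_{02}))^2]\leq 4\sigma^2$: a variance/Lipschitz bookkeeping argument transfers this objective-level bound to the state level, yielding a $4\sigma^2$ contribution that, multiplied by the ambient $4c_2$ prefactor, gives $16c_2\sigma^2$. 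Finally, taking expectation over $v_{[k]}$ (which is independent of the $d$'s) and summing the three contributions produces (\ref{a30}).
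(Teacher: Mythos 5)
Your proposal follows essentially the same route as the paper's proof in Appendix \ref{F}: the same two-stage splitting of $\|x^a_k-x^a_{ss}(u_k,\theta_k,d_k)\|^2$ (a contraction term recovering $\mu'$, then a second application of $(a+b)^2\le 2(a^2+b^2)$ to the steady-state drift that doubles the coefficients to $8c_2$ and $32c_2$), the same bound $\mathbb{E}_{v_{[k]}}[\|\theta_k-\theta_{k-1}\|^2]\le 2\eta^2\mathbb{E}_{v_{[k]}}[\|\tilde{\phi}_{k-1}\|^2]+8\delta^2$, and the same $4\sigma^2$ bound on the noise-only steady-state increment yielding $16c_2\sigma^2$. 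On the one point you flag as the main obstacle, the paper itself simply asserts $\mathbb{E}_{d}[(x^a_{ss}(u_{k-1},\theta_{k-1},d_{k-1})-x^a_{ss}(u_{k-1},\theta_{k-1},d_k))^2]\le 4\sigma^2$ as a consequence of Assumption \ref{ass7} without supplying the state-level transfer argument, so your write-up is, if anything, more explicit about what needs justification there.
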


 \begin{lemma}\label{l9}
	If Assumptions \ref{ass3}, \ref{ass7} and \ref{ass8} hold, with \eqref{ss11}, we have
	\begin{align}\label{add31}
	\mathbb{E}_{v_{[k]}}[\|\tilde{\phi}_k\|^2] &\leq \frac{12 \eta^2 p^2 M^2}{\delta^2} \mathbb{E}_{v_{[k]}}[\|\tilde{\phi}_{k-1}\|^2]+48p^2M^2 + \frac{24p^2 \sigma^2}{\delta^2}\nonumber \\
	&+ \frac{3 \mu^{\prime} p^2 M^{\prime^{2}}_{y} M^{\prime^{2}}_{g} }{2 c_2 \delta^2} (\mathbb{E}_{v_{[k]}}[V(x^a_k,u_k,\theta_{k},d_k)] \nonumber\\
	&+ \mathbb{E}_{v_{[k]}}[V(x^a_{k-1},u_{k-1},\theta_{k-1},d_{k-1})]).
	\end{align}
\end{lemma}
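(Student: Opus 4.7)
The plan is to mirror the structure of the proof of Lemma \ref{l3} (the noiseless case), augmenting the decomposition with one extra layer that isolates the random fluctuations of $\Phi(\theta,y^a,d)$ around its noise-averaged value $\tilde{\Phi}(\theta)$. Since $\|v_k\|=1$, one starts from
\begin{align*}
\|\tilde{\phi}_k\|^2 = \frac{p^2}{\delta^2}\bigl|\Phi(\theta_k,y^a_{k+1},d_k)-\Phi(\theta_{k-1},y^a_k,d_{k-1})\bigr|^2,
\end{align*}
and the task reduces to bounding the squared scalar difference on the right in expectation.

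The key decomposition is to add and subtract two intermediate quantities, namely the steady-state function values $\Phi(\theta_k,h(u_k,\theta_k,d_k),d_k)$, $\Phi(\theta_{k-1},h(u_{k-1},\theta_{k-1},d_{k-1}),d_{k-1})$, and the noise-averaged values $\tilde{\Phi}(\theta_k)$, $\tilde{\Phi}(\theta_{k-1})$. This splits the difference into five pieces: two transient errors $e_{\Phi}(x^a_k,\theta_k,d_k)$ and $e_{\Phi}(x^a_{k-1},\theta_{k-1},d_{k-1})$, two noise fluctuations of the form $\Phi(\theta,h(u,\theta,d),d)-\tilde{\Phi}(\theta)$, and one averaged-function step $\tilde{\Phi}(\theta_k)-\tilde{\Phi}(\theta_{k-1})$. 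Applying $(\sum_i a_i)^2 \le n \sum_i a_i^2$ (with a nested $(2,3)$-grouping that parallels the noiseless proof, collecting the two $e_{\Phi}$ terms into one block and the remaining three into another) yields the constants $12$, $48$, $24$, and $\tfrac{3}{2}$ appearing in \eqref{add31}.

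Each of the five pieces is then bounded separately: the transient errors $|e_{\Phi}|^2$ are controlled by $\tfrac{\mu^{\prime} M_y^{\prime\,2} M_g^{\prime\,2}}{2c_2}V$ via the noisy analogue of Lemma \ref{l1} (which follows from the Lyapunov inequalities \eqref{add25}--\eqref{addadd26} and the Lipschitz properties established in Lemma \ref{l6}); the noise-fluctuation pieces are controlled by $\sigma^2$ in expectation via Assumption \ref{ass7}; and the averaged-function step is controlled by $|\tilde{\Phi}(\theta_k)-\tilde{\Phi}(\theta_{k-1})|^2 \le M^2 \|\theta_k-\theta_{k-1}\|^2$, where $M$-Lipschitzness of $\tilde{\Phi}$ is inherited from Assumption \ref{ass8} (with $M(d)\le M$) through the expectation. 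The bound $\|\theta_k-\theta_{k-1}\|^2 \le 2\eta^2\|\tilde{\phi}_{k-1}\|^2+8\delta^2$ follows as in \eqref{a} by writing $\theta_k-\theta_{k-1}=(w_k-w_{k-1})+\delta(v_k-v_{k-1})$ and using $\|v_k-v_{k-1}\|\le 2$. Taking expectation over $v_{[k]}$ and over the independent noise realizations $d_k,d_{k-1}$, collecting terms, and multiplying through by $p^2/\delta^2$ gives \eqref{add31}.

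The main obstacle is that, unlike in the noiseless Lemma \ref{l3} where a three-term split around the steady-state values is sufficient, here $d_k$ and $d_{k-1}$ are independent and so $\Phi(\theta_k,\cdot,d_k)$ and $\Phi(\theta_{k-1},\cdot,d_{k-1})$ cannot be directly compared via Lipschitz continuity in $\theta$ alone. The noise must first be ``averaged out'' by routing the comparison through $\tilde{\Phi}$, after which Assumption \ref{ass7} converts each noise-induced residual into an additive $\sigma^2$ contribution; this is precisely what produces the new $24p^2\sigma^2/\delta^2$ term and the doubling of the constants on the $\|\tilde{\phi}_{k-1}\|^2$ and $M^2$ terms relative to Lemma \ref{l3}.
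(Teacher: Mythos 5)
Your proposal is correct in its overall strategy and reaches the same structure of bound, but it takes a slightly different route from the paper at the one place where the noise must be handled. The paper keeps the three-term outer split of Lemma \ref{l3} exactly (the two transient errors $e_{\Phi}$ plus the steady-state difference $\Phi(\theta_k,d_k)-\Phi(\theta_{k-1},d_{k-1})$, each with weight $3p^2/\delta^2$), and then disentangles noise from $\theta$ by inserting the single intermediate point $\Phi(\theta_k,d_{k-1})$ — same parameter, previous noise — bounding that piece via the consequence $\mathbb{E}_d[(\Phi(\theta,y^a,d_{01})-\Phi(\theta,y^a,d_{02}))^2]\leq 4\sigma^2$ of Assumption \ref{ass7}, which is where the $24p^2\sigma^2/\delta^2$ comes from. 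You instead route through the noise-averaged function $\tilde{\Phi}$ at both iterates, producing a five-term split with two fluctuation pieces each bounded by $\sigma^2$; this is equally valid (your Lipschitz bound on $\tilde{\Phi}$ follows from $M(d)\leq M$ by Jensen, and independence of $d_k$ from $\theta_k$ justifies applying Assumption \ref{ass7} under the conditional expectation), and arguably cleaner conceptually since it makes explicit that only the averaged function can be compared across iterates via Lipschitzness in $\theta$. The one caveat is bookkeeping: with the grouping you describe literally (two $e_{\Phi}$'s in one block, the remaining three in another), the weighted Cauchy–Schwarz coefficients come out as $(4,4,6,6,6)\cdot p^2/\delta^2$ rather than the $(3,3,12,12,6)\cdot p^2/\delta^2$ needed to reproduce \eqref{add31} exactly — i.e., you would get $\frac{2\mu'p^2M_y'^2M_g'^2}{c_2\delta^2}$ on the Lyapunov terms and $\frac{12p^2\sigma^2}{\delta^2}$ on the variance term. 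A three-way outer split (matching Lemma \ref{l3}) followed by a $(4,4,2)$-weighted split of the inner block does recover the stated constants exactly; in any case the discrepancy is a constant factor that is absorbed in the $\mathcal{O}$ analysis of Theorem \ref{optimality_noise_gap}, so this is a presentational issue rather than a gap.
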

The proofs of Lemmas \ref{l8} and \ref{l9} are shown in Appendix \ref{F} and \ref{G}, respectively. Different from Lemmas \ref{l2} and \ref{l3}, the internal inherent noise leads to an additional term $16c_2 \sigma^2$ and $\frac{24p^2 \sigma^2}{\delta^2}$, respectively.

Next, we show the following theorem to characterize the effects of noise $d_k$ on the optimality of the obtained solutions.
\begin{theorem}\label{optimality_noise_gap}
Supposing that Assumptions \ref{ass3}, \ref{ass7} and \ref{ass8} hold, for any given precision $\epsilon >0$ such that $|\tilde{\Phi}_{\delta}(\theta)-\tilde{\Phi}(\theta)|\leq \epsilon$, let $\delta=\frac{\epsilon}{M}$ and $\eta=\frac{\kappa \sigma \epsilon}{p^2\sqrt{T}}$ with $0<\kappa<\frac{p^2\sqrt{T}}{\sigma \epsilon}$,
then we have
\begin{align}\label{aa20}
	\frac{1}{T} \sum_{k=1}^{T} &\mathbb{E}_{v_{[T]}}[\tilde{\Phi}(\theta_k) - \tilde{\Phi}(\theta_k^*) ] \nonumber\\
	&= \mathcal{O} \left(\frac{\sqrt{\mu^{^{\prime}}(1+\mu^{\prime)}}\sigma^3 p^3}{(1-\rho^{\prime})\sqrt{T}\epsilon^2} + \frac{\mu^{\prime} \sigma^2}{(1-\rho^{\prime})p^4}\right),
	\end{align}
	where $\rho^{\prime} \in (0,1)$ is the maximum eigenvalue of matrix $P^{\prime}$ given by \eqref{58}. 
\end{theorem}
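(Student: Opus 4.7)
The plan is to mirror the proof of Theorem \ref{optimality-gap} but to carefully track the additional variance terms introduced by the process and measurement noise. The starting point is to set up a coupled linear matrix recursion in the two nonnegative sequences $a_k \triangleq \mathbb{E}_{v_{[k]}}[V(x^a_k,u_k,\theta_k,d_k)]$ and $b_k \triangleq \mathbb{E}_{v_{[k]}}[\|\tilde{\phi}_k\|^2]$. Lemmas \ref{l8} and \ref{l9} give precisely the scalar inequalities needed; combining them symmetrically (using $\sqrt{p_{12}p_{21}}$ off--diagonal entries as in the proof of Theorem \ref{optimality-gap}) yields a recursion of the form
\begin{equation*}
\begin{bmatrix} a_k \\ b_k \end{bmatrix} \;\preceq\; P' \begin{bmatrix} a_{k-1} \\ b_{k-1} \end{bmatrix} + \begin{bmatrix} d_1' \\ d_2' \end{bmatrix},
\end{equation*}
where $P'$ is the matrix in \eqref{58} and $d_1', d_2'$ absorb both the noiseless constants and the extra $16 c_2 \sigma^2$ and $24 p^2 \sigma^2/\delta^2$ contributed by Lemmas \ref{l8} and \ref{l9}.

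Next, I would show that with the step size $\eta = \kappa \sigma \epsilon/(p^2\sqrt{T})$ and smoothing $\delta=\epsilon/M$, the spectral radius $\rho'$ of $P'$ can be made strictly less than one whenever $\kappa$ lies in the admissible range. Iterating the matrix recursion and summing a geometric series then produces uniform bounds
$\frac{1}{T}\sum_{k=1}^T a_k = \mathcal{O}\bigl((d_1'+d_2')/(1-\rho')\bigr)$ and likewise for $b_k$. Because $d_1'$ and $d_2'$ each carry a $\sigma^2$ factor, this is where the noise variance propagates into the final bound; the $p^2$ in the second denominator of \eqref{aa20} traces back to the factor $p^2/\delta^2$ multiplying $\sigma^2$ in $d_2'$.

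Third, I would invoke the descent inequality for the smoothed objective $\tilde{\Phi}_\delta$, which is $Mp/\delta$-smooth by \eqref{10c}: for the projected update \eqref{ss11},
\begin{equation*}
\tilde{\Phi}_\delta(w_{k+1}) \;\leq\; \tilde{\Phi}_\delta(w_k) \;-\; \eta \langle \nabla \tilde{\Phi}_\delta(w_k),\tilde{\phi}_k\rangle \;+\; \tfrac{Mp\eta^2}{2\delta}\|\tilde{\phi}_k\|^2.
\end{equation*}
Taking conditional expectations, the cross term splits into $-\eta\|\nabla\tilde{\Phi}_\delta(w_k)\|^2$ plus a bias term. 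The bias has two sources: (i) the discrepancy between $y^a_{k+1}$ and the steady-state $h(u_k,\theta_k,d_k)$, which is controlled via Lemma \ref{l8} (it scales with $a_k$); and (ii) the stochastic residual between the two noisy evaluations $\Phi(\theta_k,y^a_{k+1},d_k)$ and $\Phi(\theta_{k-1},y^a_k,d_{k-1})$, which is controlled by Assumption \ref{ass7}. Telescoping over $k=1,\ldots,T$, inserting the uniform bounds on $a_k$ and $b_k$ from the previous step, and converting from $\tilde{\Phi}_\delta$ to $\tilde{\Phi}$ via $|\tilde{\Phi}_\delta-\tilde{\Phi}|\leq M\delta=\epsilon$ gives an inequality on $\frac{1}{T}\sum_k\mathbb{E}[\tilde{\Phi}(\theta_k)-\tilde{\Phi}(\theta_k^*)]$ in terms of $\eta,\delta,\sigma,p,T,\rho'$. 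Plugging in the chosen $\eta$ and $\delta$ and isolating the dominant orders in $T$ yields \eqref{aa20}.

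The main obstacle I expect is a two-sided balance on $\eta$: it must be small enough to guarantee $\rho'<1$ (so the matrix recursion geometrically damps), yet large enough that the descent term $-\eta\|\nabla\tilde{\Phi}_\delta\|^2$ dominates both the quadratic $\tfrac{Mp\eta^2}{2\delta}\mathbb{E}[\|\tilde{\phi}_k\|^2]$ and the stochastic bias. In the noiseless case this tradeoff is $\eta\sim 1/(pT)$; here the inflation of $d_1',d_2'$ by $\sigma^2$ is what forces the coarser $1/\sqrt{T}$ schedule $\eta\sim \sigma\epsilon/(p^2\sqrt{T})$ and introduces the $\sigma^3$ factor in the leading term of \eqref{aa20}. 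A secondary technical point is verifying that $\mathbb{E}_{d,v}[\Phi(\theta+\delta v,y^a,d) v ] = (\delta/p)\nabla\tilde{\Phi}_\delta(\theta)$, which requires exchanging the expectations over independent $d$ and $v$ and applying \eqref{10a} to $\tilde{\Phi}$ rather than to $\Phi$ evaluation-wise.
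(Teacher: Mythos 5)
Your first two stages match the paper's proof closely: you build the same coupled recursion in $\mathbb{E}_{v_{[k]}}[V(x^a_k,u_k,\theta_k,d_k)]$ and $\mathbb{E}_{v_{[k]}}[\|\tilde{\phi}_k\|^2]$ from Lemmas \ref{l8} and \ref{l9}, symmetrize it with $\sqrt{p_{12}'p_{21}'}$ to get the matrix $P'$ of \eqref{58}, sum the geometric series under $\rho'<1$, and you correctly identify where the extra $16c_2\sigma^2$ and $24p^2\sigma^2/\delta^2$ terms enter $d_1',d_2'$ and why they force the coarser $\eta\sim\sigma\epsilon/(p^2\sqrt{T})$ schedule. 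That part is sound and is exactly what Appendix \ref{H} does by inheriting the machinery of Appendix \ref{E}.

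The gap is in your third stage. First, the descent inequality you display is not valid for the update \eqref{ss11}: since $w_{k+1}=\Pi_{\mathcal{U}}[w_k-\eta\tilde{\phi}_k]$, the increment $w_{k+1}-w_k$ is \emph{not} $-\eta\tilde{\phi}_k$, so the cross term in the smoothness bound is not $-\eta\langle\nabla\tilde{\Phi}_\delta(w_k),\tilde{\phi}_k\rangle$; you would need the gradient-mapping version of the projected descent lemma. Second, and more fundamentally, telescoping that inequality yields a bound on $\frac{1}{T}\sum_k\mathbb{E}[\|\nabla\tilde{\Phi}_\delta(w_k)\|^2]$, i.e.\ a stationarity measure, whereas the theorem asserts a bound on the function-value gap $\tilde{\Phi}(\theta_k)-\tilde{\Phi}(\theta_k^*)$; passing from one to the other needs an extra argument (convexity of $\tilde{\Phi}_\delta$ plus boundedness of the feasible set $\theta^{\mathrm{T}}\theta\leq R$), which you do not supply. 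The paper avoids telescoping altogether: it uses convexity and \eqref{10b} to reduce to $\tilde{\Phi}_\delta(\theta_k)-\tilde{\Phi}_\delta(\theta_k^*)$, expands $\tilde{\Phi}_\delta$ to second order around $\theta_k^*$ using the $\frac{Mp}{\delta}$-smoothness from \eqref{10c}, and then bounds $\|\nabla\tilde{\Phi}_\delta(\theta_k^*)\|^2$ and $\|\theta_k-\theta_k^*\|^2$ term by term through the projection inequalities \eqref{aa}--\eqref{a} and the bound $\mathbb{E}_{v_{[k]}}[\|\tilde{\phi}_k-\nabla\tilde{\Phi}_\delta(w_k)\|^2]\leq\mathbb{E}_{v_{[k]}}[\|\tilde{\phi}_k\|^2]$, so that every term in the sum is controlled by the same quantities your recursion already bounds. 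Your route could likely be repaired, but as written the step from the recursion bounds to \eqref{aa20} does not go through.
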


\begin{remark}
The proof is shown in Appendix \ref{H}. As $T\rightarrow \infty$, the right side of \eqref{aa20} approaches $\frac{\mu^{\prime} \sigma^2}{(1-\rho^{\prime})p^4}$. The nonzero upper bound is related to the dimension $p$ of the injected false data, the variance of the objective function originating from noise and the convergence rate $\mu^{\prime}$. Compared with \eqref{20} in Theorem \ref{optimality-gap}, we also reveal that the existence of noise increases the optimality gap.
\end{remark}

	\section{Discussion}\label{secV}
	In this part, we show the detailed comparisons among the existing works on the design of the FDI attack strategy in Table \ref{table:comparison} and Table \ref{table:comparison1}, and introduce the feasible stealthy attack design. Since the design of the stealthy attack depends on the existence of the original attack detector, the produced stealthy attack strategy could be different due to distinct detection criteria. Moreover, the general assumption on the stealthy attack is that the knowledge of the existing detector is known. Herein, we discuss the following three detection criteria.

		 	\begin{table*}[h]
		\caption{The Comparisons among the Existing Works on Model-based attack strategy}
		\label{table:comparison}
		\vspace{-15pt}
		\begin{center}
			\begin{threeparttable}
			\begin{tabular}{c c c c c}
				\specialrule{0.15em}{3pt}{3pt}
				\textbf{Works}  &
				 \cite{guo2018worst} &
				 \cite{zhang2014online}   &
				 \cite{chen2017optimal}  & 
				 \cite{zhang2022design}  \\
				\specialrule{0.05em}{3pt}{3pt}
				\textbf{\tabincell{c}{System\\ Model}} & 
				 \tabincell{c}{$x_{k+1}=Ax_{k}+w_{k}$\\$y_{k}=Cx_{k}+v_{k}$}
				& \tabincell{c}{$x_{k+1}=Ax_{k}+w_{k}$\\$y_{k}=Cx_{k}+v_{k}$}
				&
				\multicolumn{2}{c}{\tabincell{c}{$x_{k+1}=Ax_{k}+Bu_{k}+D {\color{red}\zeta_{k}}+w_{k}$\\$y_{k}=Cx_{k}+E
				{\color{red}\beta_{k}} +v_{k}$}} \\
				\specialrule{0.05em}{3pt}{3pt}
			    \textbf{Noise}  & \multicolumn{4}{c}{Gaussian distribution with zero mean (i.i.d.)}\\

				\specialrule{0.05em}{3pt}{3pt}
				\textbf{\tabincell{c}{Stealthy \\Metric}} &  \tabincell{c}{Kullback-Leibler \\Divergence} & \tabincell{c}{Follow sensor-estimator \\ communicate rate} &$\chi^{2}$ detector &\tabincell{c}{Kullback-Leibler \\Divergence}\\
				
				\specialrule{0.05em}{3pt}{3pt}
				\textbf{\tabincell{c}{The \\Objective}}  &  \tabincell{c}{Max:  Estimation error \\ by stealthy linear attacks} & 
				\tabincell{c}{Degrade estimation quality \\ by stealthy attacks} &  \tabincell{c}{Track the desired  state \\ while keeping stealthy}  & \tabincell{c}{ Max: Estimation error \\by stealthy attacks}\\

				\specialrule{0.05em}{3pt}{3pt}
				\textbf{\tabincell{c}{The \\Methods}}  & Innovation-based & 
				Random theory & \tabincell{c}{Dynamic \\programming}  &  \tabincell{c}{Fisher \\information matrix}\\
				
				\specialrule{0.05em}{3pt}{3pt}
				\textbf{\tabincell{c}{The \\Requirements}}  & \multicolumn{3}{c}{All system model knowledge}  & \tabincell{c}{System parameters \\ without filter gain}\\
				
				\specialrule{0.15em}{3pt}{3pt}
			\end{tabular}
	\end{threeparttable}
    \end{center}
		
	\end{table*}

 If the detection criterion satisfies
	 \begin{align}
	     \|y^{a,d}_{k}-y^{d}_{k}\|\leq y_{\mathrm{th}},
	 \end{align}
	 the optimality of the obtained solutions in the proposed strategy remains as long as the actual output trajectory $y^{a,d}_{k}$ meets $ \|y^{a,d}_{k}-\bar{y}_{k}\|\leq 2y_{\mathrm{th}}$. Since it is a crude and inaccurate detection for a nonlinear/linear system, it is easy to deal with the stealthy constraint.
	 
	 If the detection criterion depends on the distribution gap between the normal output value and the compromised output value. For example, Kullback-Leibler divergence \cite{kullback1997information} is a good tool to measure how well two probability distributions match. Let $z_k=y^{d}_k-y_k$ and $z_k$ follows a known distribution. For example, in the linear system with Gaussian noise, the Kalman filter error $z_k$ is
	 an independent and identically distributed  (i.i.d) Gaussian variable with $z_k\sim \mathcal{N}(0,\Sigma)$. Let  $z^{a}_k=y^{a,d}_k-y_k$ and then the stealthy attacks should meet 
	 \begin{align}
	     D(z^{a}_k\|z_k)=\int_{\{\xi|f(z^{a}_k;\xi)>0\}} f(z^{a}_k;\xi) \mathrm{log} \frac{f(z^{a}_k;\xi)}{f(z_k;\xi)} \mathrm{d} \xi \leq \epsilon^{\prime},
	 \end{align}
	 where $\epsilon^{\prime}>0$ is a given stealthy parameter. 
	 In this case, the stealthy constraint can be further simplified when $z^{a}_k$ has the same statistical property as $z_k$.
	 
	 If the system adopts the data-driven detector, such as the machine-learning-based detection mechanisms \cite{maglaras2014intrusion,goh2017anomaly,anthi2021adversarial} or the behavior-based data-driven detection methods \cite{gadginmath2022direct}, the anomalies can be detected based on the characteristic of the chosen methods. Specifically,  the study \cite{maglaras2014intrusion} develops a One-Class Support Vector Machine (OCSVM) algorithm to classify the outlier class. The work \cite{goh2017anomaly} proposes the cumulative sum (CUSUM) method to detect the deviations that correspond to anomalies. In \cite{gadginmath2022direct}, a behavior-based $\chi^{2}$ detector was constructed based on a sequence of inputs and outputs and their covariance. 
	 When the stealthy attack is familiar with the existing learning-based/behavior-based detectors, the stealthy constraints can be derived and the obtained solutions are restricted in a new constraint set. Thus, the analysis of the updated constraint set is critical to the design of the FDI attack strategy with detectors.
			 		\begin{table*}[h]
		\caption{The Comparisons among the Existing Works on Data-driven attack strategy}
		\label{table:comparison1}
		\vspace{-15pt}
		\begin{center}
			\begin{threeparttable}
			\begin{tabular}{c c c c c c}
				\specialrule{0.15em}{3pt}{3pt}
				\textbf{Works}  &
				\cite{esmalifalak2011stealth}  & 
				 \cite{kim2014subspace} &
				 \cite{an2017data}    &
				 \cite{zhao2020data}  \\
				\specialrule{0.05em}{3pt}{3pt}
				\textbf{\tabincell{c}{System\\ Model}} & 
			\multicolumn{2}{c}{$z=Hx+e+{\color{red}a}$} & \multicolumn{2}{c}{\tabincell{c}{$x_{k+1}=Ax_{k}+Bu_{k}+D {\color{red}\zeta_{k}}+w_{k}$\\$y_{k}=Cx_{k}+E {\color{red}\beta_{k}} +w_{k}$}}\\
				\specialrule{0.05em}{3pt}{3pt}
			    \textbf{Noise}  & \multicolumn{2}{c}{Gaussian distribution} & $\sum_{k=0}^{T}\|w_k\|^{2}<\infty$& Gaussian distribution\\

				\specialrule{0.05em}{3pt}{3pt}
				\textbf{\tabincell{c}{Stealthy \\Metric}} & 	$z^{\prime}=H(x+a)+e$ & Subsapce $\mathcal{R}(H)$ & \tabincell{c}{$\alpha$-probability\\ $L_2$-stealthiness} &$\chi^2$ detector \\
				
				\specialrule{0.05em}{3pt}{3pt}
				\textbf{\tabincell{c}{The \\Objective}}  & \tabincell{c}{Design stealthy\\ FDI attack}  & \tabincell{c}{Design feasible \\unobservable attack} & 
			\tabincell{c}{Max: Stealthy \\attack's effects} &  \tabincell{c}{ Design data-driven \\ undetected attacks}\\

				\specialrule{0.05em}{3pt}{3pt}
				\textbf{\tabincell{c}{The \\Methods}}  & \tabincell{c}{Independent component \\analysis (ICA)} & Subspace method & 
				\tabincell{c}{Adaptive dynamic \\programming (ADP)} &  Subspace method\\
				
				\specialrule{0.05em}{3pt}{3pt}
				\textbf{\tabincell{c}{The \\Requirements}}  & \tabincell{c}{Independent, non-Gaussian \\ and the full sensors observations}  & \tabincell{c}{Linear  \\measurement model} & 
				\tabincell{c}{The attack \\strategy is linear} &  \tabincell{c}{Linear \\system model}\\
				
				\specialrule{0.15em}{3pt}{3pt}
			\end{tabular}
	\end{threeparttable}
    \end{center}
		
	\end{table*}
 
	\section{Simulation results}\label{IV}
    In this section, we evaluate the performance of the proposed attack scheme, i.e., the tracking performance and the optimality of solutions without/with noise.
    
    Consider the following system
    \begin{align}\label{linearmodel}
    \begin{split}
    x_{k+1}=&Ax_{k} + B u_k + d_k,\\
    y_{k}=&C x_{k} +d_k.
    \end{split}
    \end{align}
    where $u_k=-K x_k$ with $K=[1.5~ -1.5;0.2~ 0.1], A=[0~ 1;2~ -1], B=[0~ 0;1~ 0], C=[1~ 1]$. It is stable, controllable, and observable. We consider two kinds of noise, including $d_k\sim \mathcal{N}(0,0.02)$ and $d_k\sim U(0,0.02)$. We set the initial state $x_1=[1;-3]$, the probing signal $v_k=[\mathrm{cos}(k);\mathrm{sin}(k)]/\sqrt{2}$ to satisfy $\|v_k\|=1$, and the initial solution $w_1$ is random and follows the standard uniform distribution. We also set the smoothing parameter $\delta=10^{-3}$, the step-size $\eta=7.5\times 10^{-5}$, the attack selection matrix $\Gamma=I_2$ and the weight matrix $Q=3I_2$ where $I_2$ is a two-dimensional diagonal unit matrix. We define two types of the expected output trajectories, including the static trajectory $\bar{y}_1=-1.5$ and dynamic output trajectory $\bar{y}_2=10^{-4}k$ with respect to iteration $k$. 
    Each data point in the following figures represents an ensemble average of $50$ trials.
    
    Without noise $d_k$, we first analyze the tracking performance with different desired output trajectories. As shown in Fig. \ref{fig1}, the output value of the system under the proposed attack strategy has the ability to track the expected output trajectory whether the trajectory is static or dynamic. Especially, Fig. \ref{fig1a} and Fig. \ref{fig1b} illustrate that the output values fluctuate along the desired trajectory. Note that the phenomenon of fluctuation is normal since the output values are constantly perturbed by the time-varying probing signal $v_k$.      
 
    Then, we illustrate the optimality of solutions via the optimality gap $\Phi(\theta_k)-\Phi(\theta^*)$, which is shown in Fig. \ref{fig2}. When the expected trajectory is static, i.e., $\bar{y}_1=-1.5$, we find that the obtained solution is close to the optimal solution and the optimality gap converges to about $0.02$, as shown in Fig. \ref{fig2a}. When the expected trajectory is time-varying, i.e., $\bar{y}_2=10^{-4}k$, in Fig. \ref{fig2b}, the obtained solutions also approach the optimal one and the upper bound of the optimality gap does not exceed $0.11$. To sum up, the proposed model-free attack strategy can obtain the suboptimal attack signals that drive the output values to the desired output trajectory by only utilizing the real-time compromised measurements.
    
    \begin{figure}[t]
    	\centering
    	\subfigure[]{\label{fig1a}
    		\includegraphics[width=0.35\textwidth]{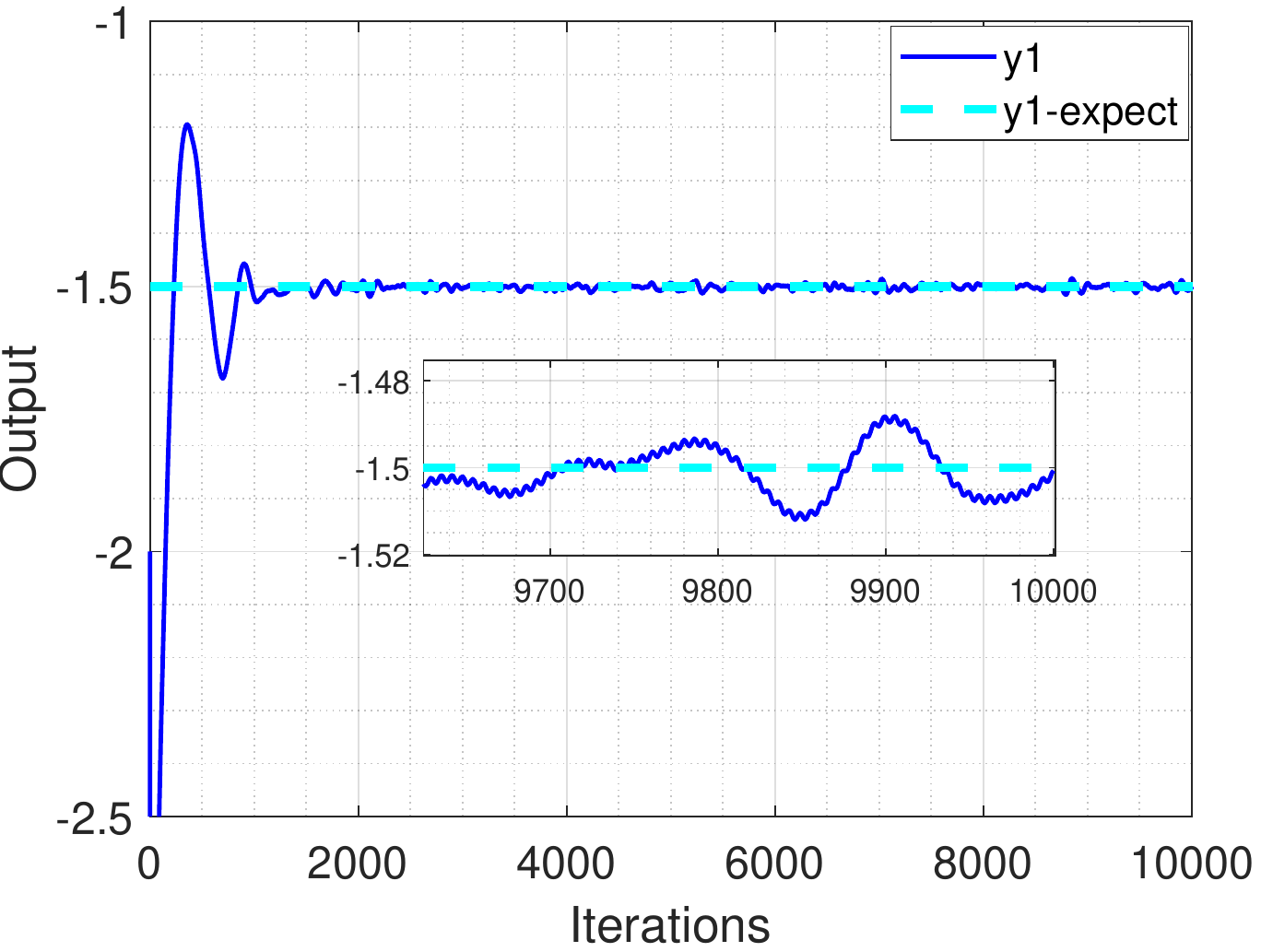}
    	}	
    	\subfigure[]{\label{fig1b}
    		\includegraphics[width=0.35\textwidth]{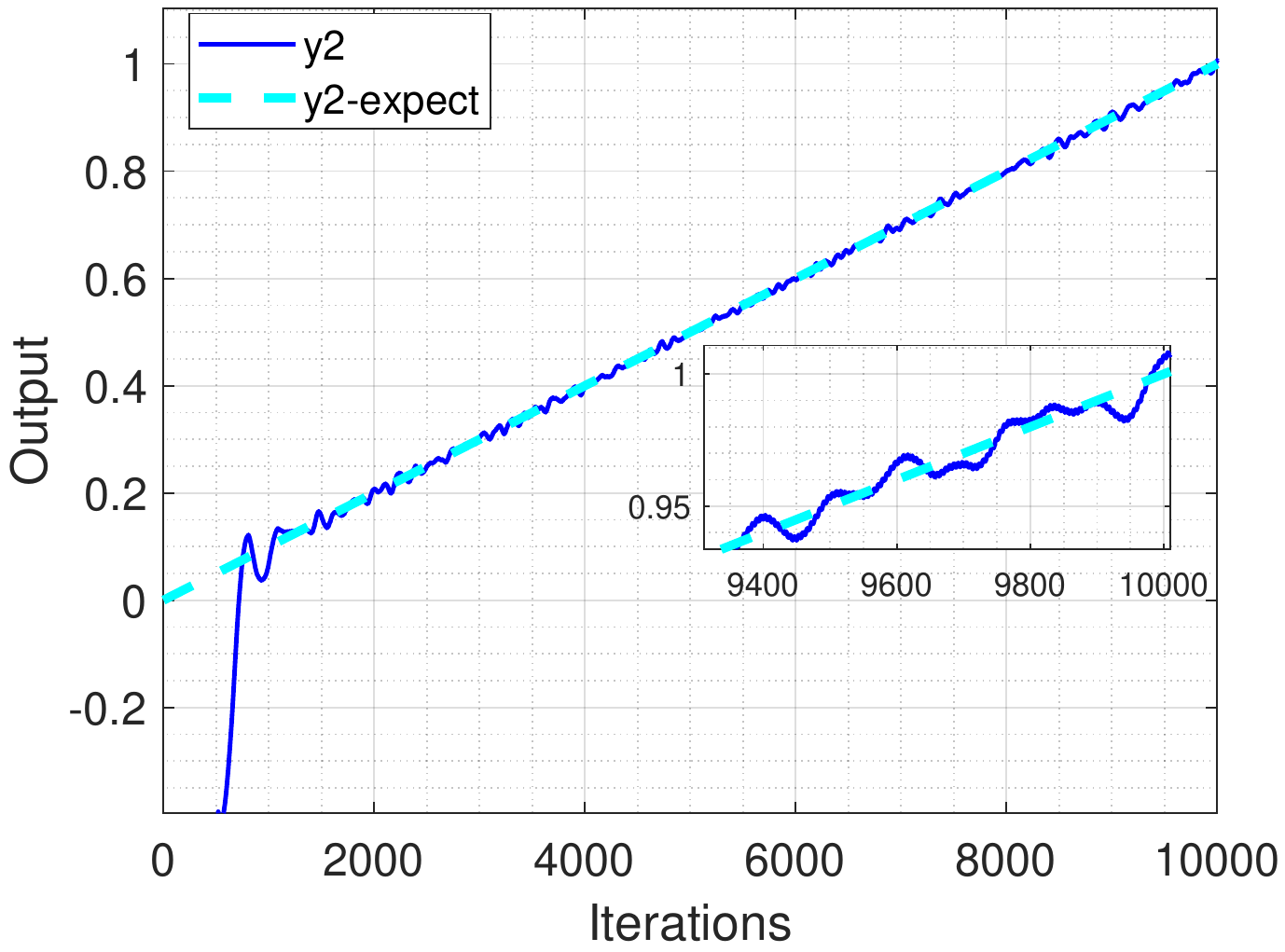}
    	}
    	\caption{Tracking performance under different expected output trajectories. (a) Static trajectory $\bar{y}_1=-1.5$ (b) Dynamic trajectory $\bar{y}_2=10^{-4}k$}
    	
    	\label{fig1}
    	\vspace*{-10pt}
    \end{figure}

\begin{figure}[t]
	\centering
	\subfigure[]{\label{fig2a}
		\includegraphics[width=0.35\textwidth]{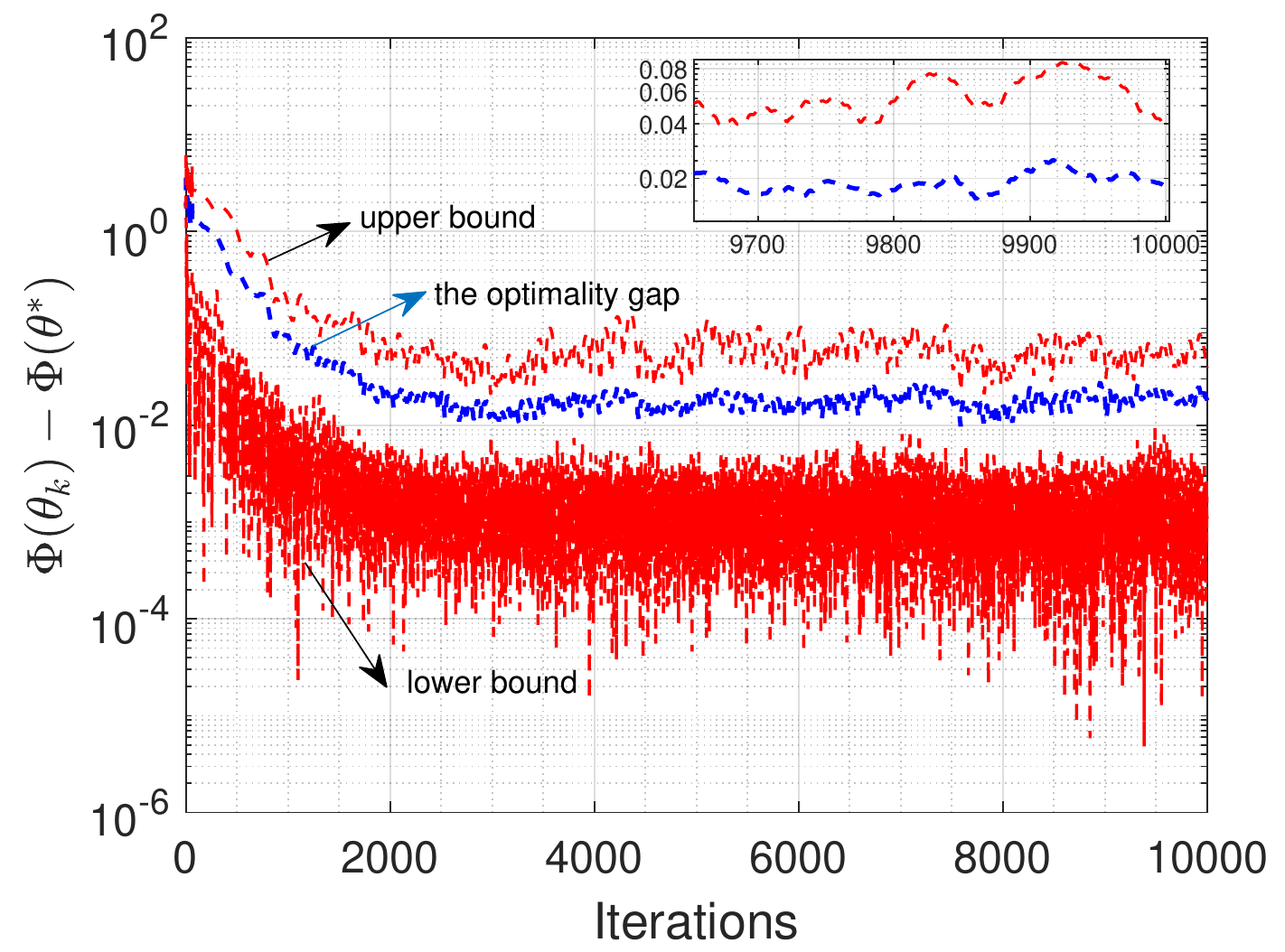}
	}	
	\subfigure[]{\label{fig2b}
		\includegraphics[width=0.35\textwidth]{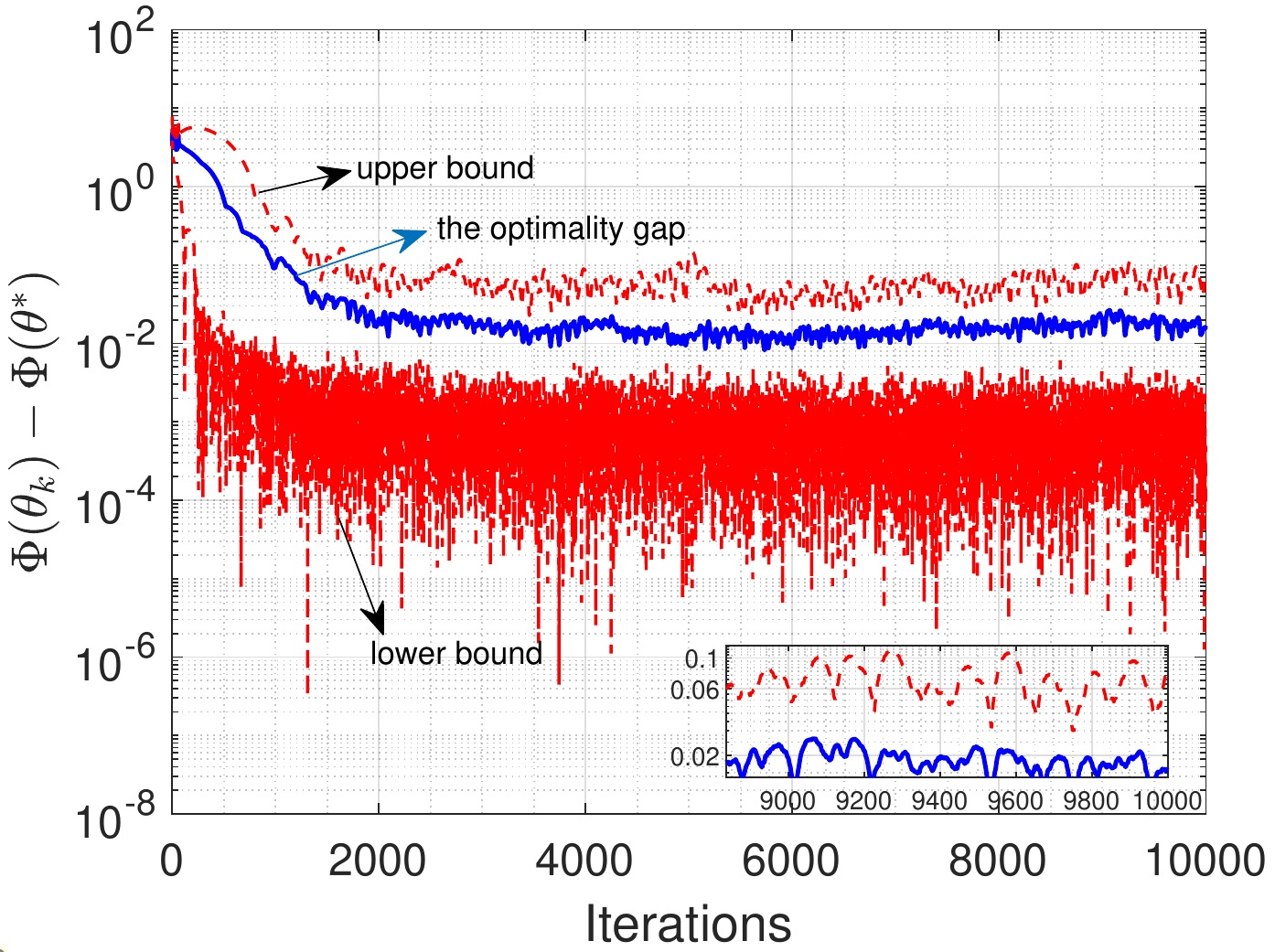}
	}
	\caption{The optimality gap under different expected output trajectories. (a) Static trajectory $\bar{y}_1=-1.5$ (b) Dynamic trajectory $\bar{y}_2=10^{-4}k$}
	
	\label{fig2}
	\vspace*{-10pt}
\end{figure}

With noise $d_k$, we analyze its effects on the tracking performance and optimality. The output $\mathrm{y2}$ and the optimality gap $\tilde{\Phi}(\theta_k)-\tilde{\Phi}(\theta^*)$ under uniform distribution noise and normal distribution noise are denoted as $\mathrm{y2-U}$, $\mathrm{y2-N}$, $\mathrm{Phi-U}$ and $\mathrm{Phi-N}$, respectively.
    From boxplot Fig. \ref{fig3} with iterations $k=40000$, we know that the final value of the actual output $y2$ is $4$ and the median is $2$. In other words, the slope of the dynamic trajectory of the output value is $10^{-4}$, which follows the expected one, and the noise does not influence the tracking trend while adding lots of outliers. In addition, combined with Fig. \ref{fig4}, the average optimality gap (red line in Fig. \ref{fig3} / blue line in Fig \ref{fig4}) of $50$ trials approaches zero although there are some outliers (red plus in Fig. \ref{fig3} / pink shadow in Fig. \ref{fig4}). Moreover, the optimality gap is larger than that without noise $d_k$ and 
    the normal distribution noise has smaller effects than the uniform distribution noise on the optimality gap.
    \begin{figure}[t]
	\centering
	\includegraphics[width=0.35\textwidth]{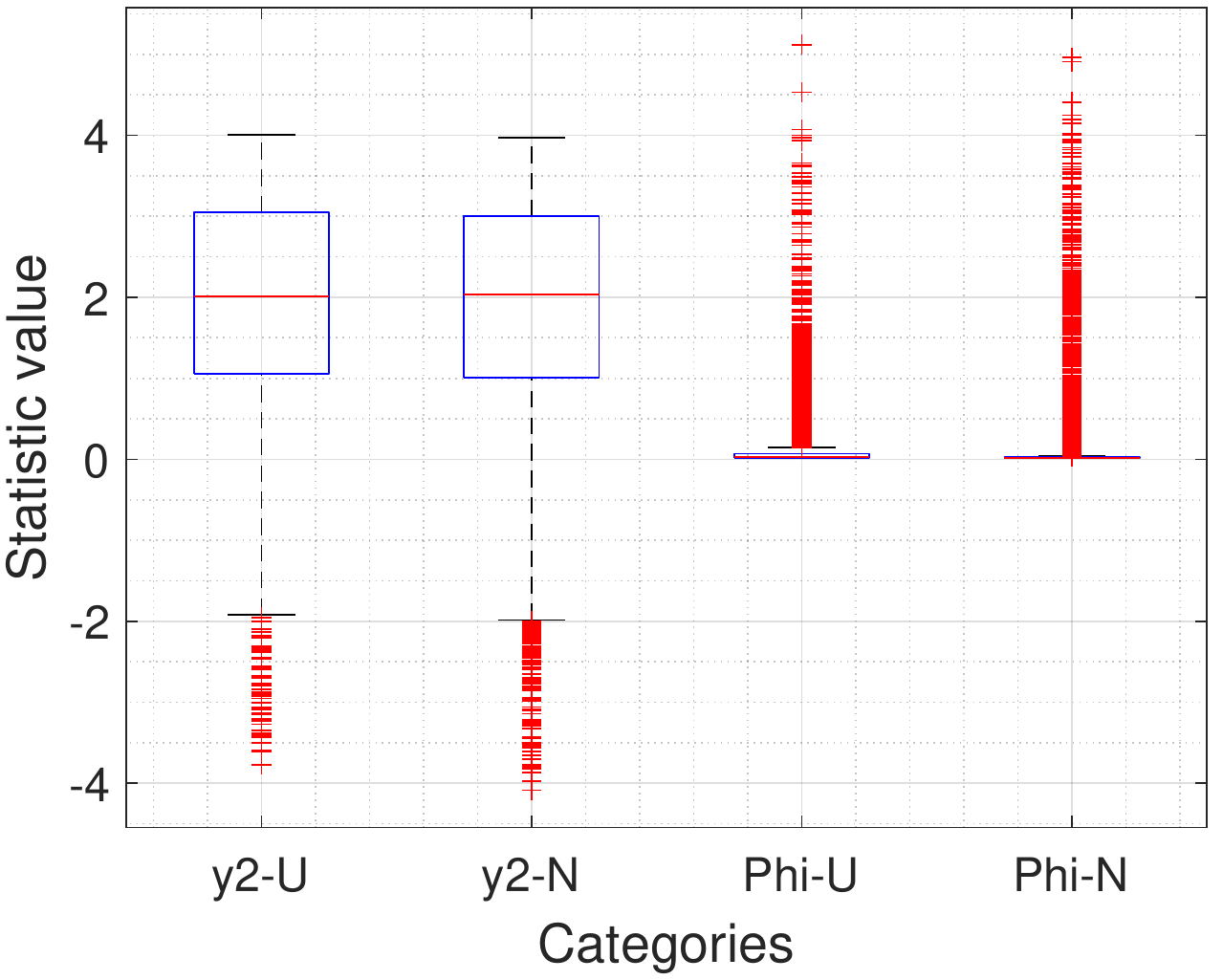}
	\caption{Comparison of the output and the optimality gap under noise.}
	\label{fig3}
	\vspace*{-10pt}
\end{figure}

\begin{figure}[t]
	\centering
    \includegraphics[width=0.35\textwidth]{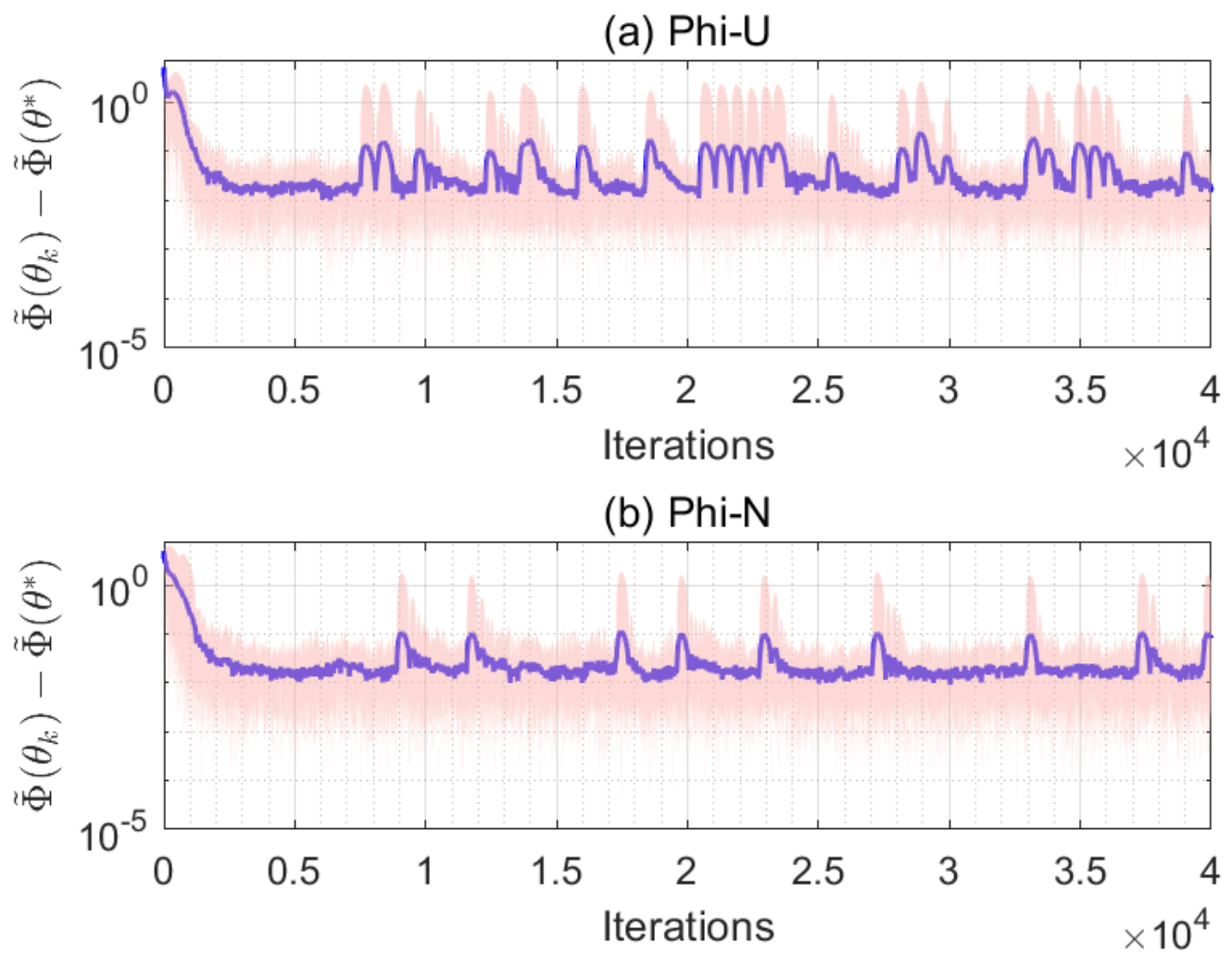}
	\caption{The bound of the optimality gap under noise.}
	\label{fig4}
	\vspace*{-10pt}
\end{figure}

\section{Conclusion}\label{V}
We considered the problem of designing a model-free attack scheme where the adversary with limited capability aims to make the output value follow the desired trajectory without any prior system model information. The designed attack scheme is model-free since only real-time measurements are required. These measurements are used to compute objective function evaluations and gradient estimates are constructed to update the attack signal based on these objective function evaluations at the previous and current time. Moreover, considering the adversary has limited capability, we constrained the obtained solutions within the feasible region by the projected gradient descent method. Finally, we analyzed the optimality of solutions and established its dependence on the dimensions of the attack signal, the iterations, the variance of the objective function, and the convergence rate of the system. Future works include the design of attack strategies with partial observations and specific detector constraints.   
 \section{APPENDIX}
    \subsection{Proof of Lemma \ref{l1}}\label{A}
Based on Assumptions $1$-$4$, then we have
		\begin{align}\label{a1}
		|e_{\Phi}(x_k,\theta_k)|^2=&|\Phi(\theta_k,y^a_{k+1})-\Phi(\theta_k,h(u_k,\theta_k))|^2 \nonumber \\
		\leq & M_{y}^2 \|y^a_{k+1} - h(u_k,\theta_k)\|^2 \nonumber \\
		\leq & M_{y}^2 M_{g}^2 \|x^a_{k+1}-x^a_{ss}(u_k,\theta_k)\|^2.
		\end{align}
		Combing (\ref{3}) with (\ref{4}), it can be inferred that
		\begin{align}\label{a2}
		\|x^a_{k+1}&-x^a_{ss}(u_k,\theta_k)\|^2 \nonumber \\
		&\leq  \frac{1}{\alpha_1} V(x^a_{k+1},u_k,\theta_k)= \frac{1}{\alpha_1} V(f'(x^a_{k},u_k,\theta_k),u_k,\theta_k)\nonumber \\
		&\leq  \frac{1}{\alpha_1} ( V(x^a_k,u_k,\theta_k) - \alpha_3 \|x^a_k - x^a_{ss}(u_k,\theta_k)\|^2)\nonumber \\
		&\leq  \frac{1}{\alpha_1} (1-\frac{\alpha_3}{\alpha_2}) V(x^a_k,u_k,\theta_k).
		\end{align}
		Substituting (\ref{a2}) into (\ref{a1}), it is easy to obtain (\ref{14}). 
	
	\subsection{Proof of Lemma \ref{l2}}\label{B}
		Based on (\ref{3}), we have
		{\small{
  \begin{align*}
		&V(x^a_k,u_k,\theta_k)\leq  \alpha_2 \|x^a_k - x^a_{ss}(u_k,\theta_k)\|^2 \nonumber \\
		&= \alpha_2 \|x^a_k - x^a_{ss}(u_{k-1},\theta_{k-1}) + x^a_{ss}(u_{k-1},\theta_{k-1})- x^a_{ss}(u_k,\theta_k) \|^2 \nonumber \\
		&\overset{(s.1)}{\leq} 2\alpha_2 (\|x^a_k - x^a_{ss}(u_{k-1},\theta_{k-1})\|^2 + \|x^a_{ss}(u_{k-1},\theta_{k-1})- x^a_{ss}(u_k,\theta_k) \|^2) \nonumber \\
		&\overset{(s.2)}{\leq} 2\alpha_2 (\frac{1}{\alpha_1}(1-\frac{\alpha_3}{\alpha_2} V(x^a_{k-1},u_{k-1},\theta_{k-1})) + M_{x}^{2} \|\theta_k-\theta_{k-1}\|^2),
		\end{align*}}}where $(s.1)$ follows the fact that $\|a+b\|^2 \leq 2(\|a\|^2 + \|b\|^2)$ and $(s.2)$ follows from \eqref{6}, \eqref{a2}, and the Lipschitz continuity of $x^a_{ss}(u_k,\theta_k)$. The upper bound of $\mathbb{E}_{v_{[k]}}[\|\theta_k-\theta_{k-1}\|^2]$ is given as
		\begin{align*}
		\mathbb{E}_{v_{[k]}}&[\|\theta_k-\theta_{k-1}\|^2] \nonumber \\
		&= \mathbb{E}_{v_{[k]}} [\| w_k- w_{k-1} + \delta v_k - \delta v_{k-1} \|^2] \nonumber \\
		&\overset{(s.1)}{\leq}  \mathbb{E}_{v_{[k]}} [2 \|w_k - w_{k-1}\|^2 + 2 \delta^2 \|v_k - v_{k-1}\|^2]\nonumber \\
		&\overset{(s.2)}{\leq}  2\eta^2 \mathbb{E}_{v_{[k]}}[\|\tilde{\phi}_{k-1}\|^2] + 2 \delta^2 \mathbb{E}_{v_{[k]}} [2 \|v_k\|^2 + 2\|v_{k-1}\|^2]\nonumber \\
		&\overset{(s.3)}{\leq}  2\eta^2 \mathbb{E}_{v_{[k]}}[\|\tilde{\phi}_{k-1}\|^2] + 8 \delta^2,
		\end{align*}
		where $(s.1)$ follows that $\mathbb{E}[(a+b)^2] \leq 2 \mathbb{E}[a^2+b^2]$, $(s.2)$ follows from (\ref{a}) and $\|a-b\|^2 \leq 2(\|a\|^2 + \|b\|^2)$, and $(s.3)$ follows the fact that $\|v_k\|=1$ since $v_k$ is selected uniformly at random from the unit sphere.
	
		Combining the above results, we can infer that (\ref{add17}) holds.

	\subsection{Proof of Lemma \ref{l3}}\label{C}
	Let $\Phi(\theta_k) \triangleq \Phi(\theta_k,h(u_k,\theta_k))$ and $\Phi(\theta_{k-1}) \triangleq \Phi(\theta_{k-1},h(u_{k-1},\theta_{k-1}))$. With (\ref{add11}) and (\ref{add15}), then we have
		\begin{align*}
		&\mathbb{E}_{v_{[k]}}[\|\tilde{\phi}_k\|^2] \nonumber \\
		&=\frac{p^2}{\delta^2} \mathbb{E}_{v_{[k]}}[\|v_k (\Phi(\theta_k,y^a_{k+1})- \Phi(\theta_{k-1},y^a_{k}))\|^{2}] \nonumber\\
		&=  \frac{p^2}{\delta^2} \mathbb{E}_{v_{[k]}}[\|v_k (\Phi(\theta_k)- \Phi(\theta_{k-1}) \nonumber\\
		&+ e_{\Phi}(x^a_k,\theta_k) - e_{\Phi}(x^a_{k-1},\theta_{k-1}) )\|^{2}] \nonumber\\
		&\overset{(s.1)}{\leq}  \underbrace{\frac{3p^2}{\delta^2} \mathbb{E}_{v_{[k]}}[\|v_k (\Phi(\theta_k)- \Phi(\theta_{k-1}))\|^{2}]}_{\textcircled{1}} \nonumber\\
		&+ \underbrace{\frac{3p^2}{\delta^2} \mathbb{E}_{v_{[k]}}[\|v_k e_{\Phi}(x^a_k,\theta_k)\|^{2}]
		+ \frac{3p^2}{\delta^2} \mathbb{E}_{v_{[k]}}[\|v_k e_{\Phi}(x^a_{k-1},\theta_{k-1})\|^{2}]}_{\textcircled{2}},
		\end{align*}
		where $(s.1)$ follows the fact that $\mathbb{E}[(a+b+c)^2] \leq 3 \mathbb{E}[a^2+b^2+c^2]$. Next, we provide the upper bound of the item $\textcircled{1}$ and $\textcircled{2}$, respectively.
		\begin{align}\label{add1}
		\textcircled{1} \overset{(s.1)}{\leq}& \frac{3p^2}{\delta^2} \mathbb{E}_{v_{[k]}}[\|v_k (\Phi(w_k + \delta v_k)- \Phi(w_{k-1} + \delta v_k)) \nonumber\\
		&+ \Phi(w_{k-1} + \delta v_k) - \Phi(w_{k-1} + \delta v_{k-1})\|^2] \nonumber\\
		=& \frac{6p^2}{\delta^2} \mathbb{E}_{v_{[k]}}[\|v_k (\Phi(w_k + \delta v_k)- \Phi(w_{k-1} + \delta v_k)) \|^2  \nonumber \\
		&+ \|\Phi(w_{k-1} + \delta v_k) - \Phi(w_{k-1} + \delta v_{k-1})\|^2] \nonumber \\
		\overset{(s.2)}{\leq}&  \frac{6p^2M^2}{\delta^2} \mathbb{E}_{v_{[k]}}[\|v_k\|^2 \|w_k - w_{k-1}\|^2 \nonumber \\
		&+ \|v_k\|^2 \delta^2 \|v_k - v_{k-1}\|^2] \nonumber \\
		\overset{(s.3)}{\leq} & \frac{6p^2M^2}{\delta^2} (\eta^2 \mathbb{E}_{v_{[k]}}[\|\tilde{\phi}_{k-1}\|^2] \nonumber \\
		&+ \delta^2 \mathbb{E}_{v_{[k]}}[\|v_k\|^2 (2 \|v_k\|^2 + 2 \|v_{k-1}\|^2)]) \nonumber \\
		=&\frac{6 \eta^2 p^2 M^2}{\delta^2} \mathbb{E}_{v_{[k]}}[\|\tilde{\phi}_{k-1}\|^2] +24p^2M^2 ,
		\end{align}
		where $(s.1)$ holds by adding and minus $\Phi(w_{k-1} + \delta v_k)$, $(s.2)$ holds due to Assumption \ref{ass4} and the dependency of $v_k$ with respect to $w_k$, and $(s.3)$ follows the fact that (\ref{a}) holds and $\|v_k\|=1$.	
		\begin{align}\label{add2}
		\textcircled{2} = & \frac{3p^2}{\delta^2} (\mathbb{E}_{v_{[k]}}[\|v_k\|^2 |e_{\Phi}(x^a_k,\theta_k)|^{2}] \nonumber \\ &+\mathbb{E}_{v_{[k]}}[\|v_k\|^2]  \mathbb{E}_{v_{[k]}}[|e_{\Phi}(x^a_{k-1},\theta_{k-1})|^{2}])\nonumber \\
		\overset{(s.1)}{\leq} & \frac{3p^2}{\delta^2} (\mathbb{E}_{v_{[k]}}[\|v_k\|^4])^{\frac{1}{2}} (\mathbb{E}_{v_{[k]}}[|e_{\Phi}(x^a_k,\theta_k)|^4])^{\frac{1}{2}} \nonumber \\
		&+ \frac{3p^2}{\delta^2} \mathbb{E}_{v_{[k]}}[|e_{\Phi}(x^a_{k-1},\theta_{k-1})|^{2}]\nonumber \\
		\overset{(s.2)}{\leq} & \frac{3 \mu p^2 M_{y}^{2} M_{g}^{2} }{2 \alpha_2 \delta^2} (\mathbb{E}_{v_{[k]}}[V(x^a_k,u_k,\theta_k)] \nonumber \\
		&+\mathbb{E}_{v_{[k]}}[V(x^a_{k-1},u_{k-1},\theta_{k-1})]
		), 
		\end{align}
		where $(s.1)$ holds based on the Cauchy-Schwarz inequality capable of splitting the product of two correlated random variables $v_k$ and $e_{\Phi}(x_k,\theta_k)$ and $(s.2)$ holds based on (\ref{14}).  
		Combined with the above results, the proof is completed.

\subsection{Proof of Theorem \ref{optimality-gap}}\label{E}
Since the objective function $\Phi(\theta_k)$ is convex, the Gaussian smooth approximation of $\Phi(\theta_k)$ is also convex\cite{liu2018zeroth}. With (\ref{10b}), then we have
     	\begin{align}\label{25}
     	\Phi(\theta_k) - \Phi(\theta_k^*) \leq \Phi_{\delta}(\theta_k) - \Phi_{\delta}(\theta_k^*) + 2 M \delta.
     	\end{align} 
     	With (\ref{10c}), the Taylor expansion of $\Phi_{\delta}(\theta_k)$ at solution $\theta_k^{*}$ is shown as
     	\begin{align}\label{add26}
     	\Phi_{\delta}(\theta_k) \leq & \Phi_{\delta}(\theta_k^*) + \nabla \Phi_{\delta}(\theta_k^*)^{\mathrm{T}} (\theta_k - \theta_k^{*})\nonumber\\
     	+& \frac{ M^2 p^2}{2 \delta^2}\|\theta_k - \theta_k^{*}\|^2,
     	\end{align}
     	where $\theta_{k}^*$ is the optimal solution of the problem $\mathbf{\mathcal{P}_1}$ at iteration $k$.
     	Taking the expectation of $v_{[k]}$ at both ends of the inequality (\ref{add26}), then we have
     	\begin{align*}
     	\mathbb{E}_{v_{[k]}}& [\Phi_{\delta}(\theta_k)] -\mathbb{E}_{v_{[k]}} [\Phi_{\delta}(\theta_k^{*})] \leq \nonumber \\
     	&\mathbb{E}_{v_{[k]}} [\nabla \Phi_{\delta}(\theta_k^*)^{\mathrm{T}} (\theta_k - \theta_k^{*})] 
     	+ \frac{ M^2 p^2}{2 \delta^2} \mathbb{E}_{v_{[k]}} [\|\theta_k - \theta_k^{*}\|^2].
     	\end{align*}
     	Since 
     	\begin{align*}
     	\mathbb{E}_{v_{[k]}} &[\nabla \Phi_{\delta}(\theta_k^*)^{\mathrm{T}} (\theta_k - \theta_k^{*})] \leq \nonumber \\
     	& \frac{1}{2} (\mathbb{E}_{v_{[k]}}  [\|\nabla \Phi_{\delta}(\theta_k^*)\|^2] + \mathbb{E}_{v_{[k]}} [\|\theta_k - \theta_k^{*}\|^2])
     	\end{align*}
     	where the inequality follows the fact that for $\forall a_1, a_2$,
     	\begin{align*}
     	\mathbb{E}[a_1^{\mathrm{T}} a_2] \leq (\mathbb{E}[\|a_1\|^2] \mathbb{E}[\|a_2\|^2])^{\frac{1}{2}} \leq \frac{1}{2} (\mathbb{E}[\|a_1\|^2] + \mathbb{E}[\|a_2\|^2]),
     	\end{align*} 
     	then it can be inferred that 
     	\begin{align*}
     	\mathbb{E}_{v_{[k]}} [\Phi_{\delta}(\theta_k)] &\leq \mathbb{E}_{v_{[k]}} [\Phi_{\delta}(\theta_k^{*})] + \underbrace{\frac{1}{2} \mathbb{E}_{v_{[k]}} [\|\nabla \Phi_{\delta}(\theta_k^{*})\|^2]}_{\textcircled{1}} \nonumber\\
     	&+ \underbrace{(\frac{1}{2} + \frac{ M^2 p^2}{2 \delta^2})\mathbb{E}_{v_{[k]}} [\|\theta_k - \theta_k^{*}\|^2]}_{\textcircled{2}}.
     	\end{align*}
     	
     	Next, we analyze the upper bound of the item $\textcircled{1}$ and $\textcircled{2}$.
     	\begin{align*}
     	\textcircled{1} =& \frac{1}{2} \mathbb{E}_{v_{[k]}} [\| \nabla \Phi_{\delta} (w_k^* + \delta v_k)\|^2],\nonumber \\
     	=& \frac{1}{2} \mathbb{E}_{v_{[k]}} [\|\nabla \Phi_{\delta} (w_k + \delta v_k)-(\nabla \Phi_{\delta} (w_k + \delta v_k)\nonumber\\
     	-&\nabla \Phi_{\delta} (w_k^* + \delta v_k))\|^2], \nonumber \\
     	\overset{(s.1)}{\leq} & \mathbb{E}_{v_{[k]}} [\|\nabla \Phi_{\delta} (w_k + \delta v_k)\|^2] + \mathbb{E}_{v_{[k]}} [\|\nabla \Phi_{\delta} (w_k + \delta v_k)\nonumber\\
     	-&\nabla \Phi_{\delta} (w_k^* + \delta v_k)\|^2], \nonumber \\
     	\overset{(s.2)}{\leq} &  \mathbb{E}_{v_{[k]}} [\|\nabla \Phi_{\delta} (\theta_k)\|^2] +  \frac{M^2 p^2}{\delta^2} \mathbb{E}_{v_{[k]}} [\|\theta_k-\theta_k^*\|^2],\nonumber \\
     	=&  \mathbb{E}_{v_{[k]}} [\|\nabla \Phi_{\delta} (w_k)-(\nabla \Phi_{\delta} (w_k)- \nabla \Phi_{\delta} (\theta_k))\|^2] \nonumber\\
     	+&  \frac{M^2 p^2}{\delta^2} \mathbb{E}_{v_{[k]}} [\|\theta_k-\theta_k^*\|^2],\nonumber \\
     	\overset{(s.3)}{\leq} & 2 \mathbb{E}_{v_{[k]}} [\| \nabla \Phi_{\delta} (w_k) \|^2]+ 2 M^2 p^2 \mathbb{E}_{v_{[k]}} [\|v_k\|^2] \nonumber\\
     	+ &\frac{M^2 p^2}{\delta^2} \mathbb{E}_{v_{[k]}} [\|\theta_k-\theta_k^*\|^2], \nonumber \\
     	=& 2 \mathbb{E}_{v_{[k]}} [\| \nabla \Phi_{\delta} (w_k) \|^2]+ 2 M^2 p^2 
     	+ \frac{M^2 p^2}{\delta^2} \mathbb{E}_{v_{[k]}} [\|\theta_k-\theta_k^*\|^2],
     	\end{align*}
     	where $(s.1)$ follows the fact that $\|b\|^2=\|a-(a-b)\|^2 \leq 2\|a\|^2 + 2\|a-b\|^2$, $(s.2)$ follows from (\ref{10c}), i.e., $\Phi_\delta(\theta_k)$ is $\frac{Mp}{\delta}-$ smoothness, and $(s.3)$ follows from (\ref{10c}) and $\|\delta v_k\|^2=\delta^2 \|v_k\|^2$.     	   	
     	\begin{align*}
     	\textcircled{2} =&  (\frac{1}{2} + \frac{ M^2 p^2}{2\delta^2}) \mathbb{E}_{v_{[k]}} [\|w_k - w_k^{*}\|^2],\nonumber \\ 
     	\overset{(s.1)}{\leq} & (\frac{1}{2} + \frac{ M^2 p^2}{2\delta^2}) (2 \mathbb{E}_{v_{[k]}} [\|w_{k-1}-w_{k}^*\|^2] + 2 \eta^{2} \mathbb{E}_{v_{[k]}} [ \|\tilde{\phi}_{k-1}\|^2]), \nonumber \\
     	\overset{(s.2)}{\leq} & (\frac{1}{2} + \frac{ M^2 p^2}{2\delta^2}) (2\mathbb{E}_{v_{[k]}} [\|w_{k-1}-w_{k}\|^2] + 2\eta^2 \mathbb{E}_{v_{[k]}} [ \|\tilde{\phi}_{k-1}\|^2]), \nonumber \\
     	\overset{(s.3)}{\leq} & (\frac{1}{2} + \frac{ M^2 p^2}{2\delta^2}) 4\eta^2 \mathbb{E}_{v_{[k]}} [ \|\tilde{\phi}_{k-1}\|^2], \nonumber \\
     	=& (\frac{2 \delta^2 + 2 M^2 p^2 }{\delta^2}) \eta^2 \mathbb{E}_{v_{[k]}} [ \|\tilde{\phi}_{k-1}\|^2],
     	\end{align*}
     	where $(s.1)$ follows from (\ref{aa}), $(s.2)$ follows that $\|w_{k-1}-w_k^*\|^2\leq \|w_{k-1}-w_{k}\|^2$, and $(s.3)$ follows from (\ref{a}). 
     	
     	The second moment of the gradient of $\Phi_{\delta} (w_k)$ at solution $w_k$ is $\|\nabla \Phi_{\delta} (w_k)\|^2$ and we have  	
     	\begin{align*}
     	&\mathbb{E}_{v_{[k]}} [\|\nabla \Phi_{\delta} (w_k)\|^2]\nonumber\\
     	&=  \mathbb{E}_{v_{[k]}} [\|\tilde{\phi}_k - (\tilde{\phi}_k - \nabla \Phi_{\delta} (w_k)) \|^2], \nonumber \\
     	&\leq  2 \mathbb{E}_{v_{[k]}} [\|\tilde{\phi}_k\|^2] + 2 \mathbb{E}_{v_{[k]}} [\|\tilde{\phi}_k - \nabla \Phi_{\delta} (w_k)\|^2],
     	\end{align*} 
     	where the inequality follows the fact that $\mathbb{E}[(a-b)^2] \leq 2(\mathbb{E}[a^2] + \mathbb{E}[b^2])$. Since
     	\begin{align*}
     	\mathbb{E}_{v_{[k]}}[\|\tilde{\phi}_k - \nabla \Phi_{\delta}(w_k)\|^2]\leq \mathbb{E}_{v_{[k]}} [\|\tilde{\phi}_k\|^2],
     	\end{align*}
     	which follows from $(58)$ in \cite[Theorem 8]{he2022model}, with (\ref{add18}),
     	\begin{align*}
     	\mathbb{E}_{v_{[k]}} [\|\nabla \Phi_{\delta} (w_k)\|^2] &\leq  \frac{24 \eta^2 p^2 M^2}{\delta^2}  \mathbb{E}_{v_{[k]}}[\|\tilde{\phi}_{k-1}\|^2] + 96p^2M^2 \nonumber \\
     	&+ \frac{6 \mu p^2 M_{y}^{2} M_{g}^{2} }{ \alpha_2 \delta^2} (\mathbb{E}_{v_{[k]}}[V(x^a_k,u_k,\theta_k)] \nonumber \\
     	&+\mathbb{E}_{v_{[k]}}[V(x^a_{k-1},u_{k-1},\theta_{k-1})]).
     	\end{align*}
     	   	
     	Rearranging the above items, thus we have
     	\begin{align*}
     	\mathbb{E}_{v_{[k]}} &[\Phi_{\delta}(\theta_k)] -\mathbb{E}_{v_{[k]}} [\Phi_{\delta}(\theta_k^{*})]  \nonumber \\
     	& \leq (2+ \frac{54 M^2 p^2}{\delta^2}) \eta^2 \mathbb{E}_{v_{[k]}} [ \|\tilde{\phi}_{k-1}\|^2] + 194 M^2 p^2 \nonumber \\
     	& +\frac{12 \mu p^2 M_{y}^{2} M_{g}^{2} }{ \alpha_2 \delta^2} (\mathbb{E}_{v_{[k]}}[V(x^a_k,u_k,\theta_k)] \nonumber \\
     	&+\mathbb{E}_{v_{[k]}}[V(x^a_{k-1},u_{k-1},\theta_{k-1})]).
     	\end{align*}
     	Then, it follows that
     	\begin{align}\label{26}
     	&\frac{1}{T} \sum_{k=1}^{T} \mathbb{E}_{v_{[T]}}[\Phi_{\delta}(\theta_k) - \Phi_{\delta}(\theta_k^*) ] \nonumber\\
     	&\leq (\frac{2}{T}+ \frac{54 M^2 p^2}{\delta^2 T}) \eta^2 \sum_{k=1}^{T} \mathbb{E}_{v_{[T]}} [ \|\tilde{\phi}_{k-1}\|^2] + 194 M^2 p^2 \nonumber \\
     	&+ \frac{12 \mu p^2 M_{y}^{2} M_{g}^{2} }{ \alpha_2 \delta^2 T} \sum_{k=1}^{T} (\mathbb{E}_{v_{[T]}}[V(x^a_k,u_k,\theta_k)] \nonumber \\
     	&+\mathbb{E}_{v_{[T]}}[V(x^a_{k-1},u_{k-1},\theta_{k-1})])
     	\end{align}
     	To guarantee $|\Phi_{\delta}(w) - \Phi(w)| \leq \epsilon$, we set $\delta=\frac{\epsilon}{M}$. Combined (\ref{add17}), (\ref{25}) and (\ref{26}), we obtain			
     	\begin{align}\label{28}
     	&\frac{1}{T} \sum_{k=1}^{T} \mathbb{E}_{v_{[T]}}[\Phi(\theta_k) - \Phi(\theta_k^*) ] \nonumber\\
     	& \leq \frac{\eta^2 p^2}{\delta^2 T} ( \frac{2\delta^2}{p^2} + 54 M^2  + 48\mu M_x^{2} M_y^{2} M_g^{2} ) \sum_{k=1}^{T} \mathbb{E}_{v_{[T]}} [\|\tilde{\phi}_{k-1}\|^2]\nonumber \\
     	&+ \frac{12 \mu(\mu+1) p^2 M_{y}^{2} M_{g}^{2} }{ \alpha_2 \delta^2 T} \sum_{k=1}^{T} \mathbb{E}_{v_{[T]}}[V(x^a_{k-1},u_{k-1},\theta_{k-1})] \nonumber \\
     	& + \frac{192 \mu p^2 M_x^{2} M_y^{2} M_g^{2}}{T} + 194M^2 p^2 + 2M \delta. 
     	\end{align}
     	Since $\mathbb{E}_{v_{[k]}}[\|\tilde{\phi}_{k}\|^2]$ and $\mathbb{E}_{v_{[k]}}[V(x^a_{k},u_{k},\theta_{k})]$ are coupled variables, we rely on \cite[Lemma 11]{he2022model}, which shows the upper bound of the partial sum of non-negative coupled series, to analyze (\ref{28}). 
     	
     	Combining (\ref{add17}) and (\ref{add18}), we can obtain a compacted form, which is shown as
     	\begin{align*}
     	& \left[ \begin{array}{c}
     	\mathbb{E}_{v_{[k]}}[\|\tilde{\phi}_k\|^2] \\
     	\mathbb{E}_{v_{[k]}}[\sqrt{\frac{p_{12}}{p_{21}}}V(x^a_k,u_k,\theta_k)]
     	\end{array}
     	\right] 
     	\preceq \nonumber \\
     	&P
     	\left[  \begin{array}{c}
     	\mathbb{E}_{v_{[k]}}[\|\tilde{\phi}_{k-1}\|^2] \\
     	\mathbb{E}_{v_{[k]}}[\sqrt{\frac{p_{12}}{p_{21}}}V(x^a_{k-1},u_{k-1},\theta_{k-1})] 
     	\end{array}
     	\right]
     	+
     	\left[ \begin{array}{c}
     	d_1\\
     	\sqrt{\frac{p_{12}}{p_{21}}}d_2
     	\end{array}
     	\right],
     	\end{align*}
     	where 
     	$P=\left[ \begin{array}{cc}
     	p_{11} & \sqrt{p_{12}p_{21}}\\
     	\sqrt{p_{12}p_{21}} & p_{22}
     	\end{array}
     	\right]$
     	with
     	\begin{align}\label{add27}
     	p_{11}=& \frac{6p^2 \eta^2}{\delta^2}(M^2 + \mu M_x^{2} M_y^{2} M_g^{2}), \nonumber\\
     	p_{12}=& \frac{3\mu p^2 M_y^{2} M_g^{2}}{2 \alpha_2 \delta^2}(1+\mu), \nonumber\\
     	p_{21}=& 4\alpha_2 \eta^2 M_x^2, \nonumber \\
     	p_{22}=& \mu,\nonumber\\
     	d_1=& 24p^2(M^2 + \mu M_x^{2} M_y^{2} M_g^{2}),\nonumber \\
     	d_2=& 16\alpha_2 \delta^2 M_x^2.
     	\end{align}
     	Then, we have
     	\begin{align}\label{27}
     	 \max \{\sum_{k=1}^{T}&\mathbb{E}_{v_{[T]}}[\|\tilde{\phi}_{k-1}\|^2], \sum_{k=1}^{T} \mathbb{E}_{v_{[T]}}[\sqrt{\frac{p_{12}}{p_{21}}}V(x^a_{k-1},u_{k-1},\theta_{k-1})]\} \nonumber \\
     	\leq &(\rho^{T} + \frac{1}{1-\rho})B_1+ \frac{T}{1-\rho}(d_1+\sqrt{\frac{p_{12}}{p_{21}}}d_2),\\
     	=& \mathcal{O}\left(\frac{T}{1-\rho}(p^2 + \mu p^2 +\frac{p \delta}{\eta})\right)
     	\end{align}
     	where $B_1= \mathbb{E}[\|\tilde{\phi}({w_1})\|^2]+\mathbb{E}[\sqrt{\frac{p_{12}}{p_{21}}}V(x^a_{1},u_{1},\theta_1)]$ and $\rho<1$ is the maximum singular value of the matrix $P$.
     	
     	By solving the characteristic equation $|\lambda I - P|=0$ with eigenvalues $\lambda$, then 
     	\begin{align}
     	\rho=&\frac{p_{11}+p_{22}}{2} + \sqrt{(\frac{p_{11}-p_{22}}{2})^2+p_{12}p_{21}}\nonumber \\
     	\leq & \frac{p_{11}+p_{22}}{2} + |\frac{p_{11}-p_{22}}{2}|+ \sqrt{p_{12}p_{21}} \nonumber \\
     	=& \max\{p_{11},p_{22}\}+ \sqrt{p_{12}p_{21}}.
     	\end{align}
     	To guarantee $\rho<1$, we need to set $\delta$ and $\eta$ such that
     	\begin{align}\label{30}
     	p_{11}+ \sqrt{p_{12}p_{21}}<1, \quad p_{22}+ \sqrt{p_{12}p_{21}}<1.
     	\end{align}		
     	Then, combined (\ref{28}) and (\ref{27}), it follows that
     	\begin{align}\label{add30}
     	&\frac{1}{T} \sum_{k=1}^{T} \mathbb{E}_{v_{[T]}}[\Phi(\theta_k) - \Phi(\theta_k^*) ] \leq l_3 +\nonumber\\
     	& (l_1 + \sqrt{\frac{p_{21}}{p_{12}}} l_2)
     	\left\{(\rho^{T} + \frac{1}{1-\rho})B_1 
     	+ \frac{T}{1-\rho}(d_1+\sqrt{\frac{p_{12}}{p_{21}}} d_2) \right\}
     	\end{align}
     	where 
     	\begin{align*}
     	l_1=& \frac{2\eta^2}{T}+ \frac{p^2 }{\delta^2 T}(54 M^2 \eta^2 + 48\mu M_x^{2} M_y^{2} M_g^{2} \eta^2), \nonumber\\
     	l_2=& \frac{12 \mu(\mu+1) p^2 M_{y}^{2} M_{g}^{2} }{ \alpha_2 \delta^2 T},\nonumber \\
     	l_3=& \frac{192 \mu p^2 M_x^{2} M_y^{2} M_g^{2}}{T} + 194M^2 p^2 + 2M \delta.
     	\end{align*}
     	Due to $\delta=\frac{\epsilon}{M}$, we set $\eta=\frac{\kappa \epsilon}{pT}$ such that $\frac{p^4 \eta^2}{\epsilon^2}$ and $\frac{p^2}{T}$ have the same order. Then, the order of (\ref{add30}) is shown as (\ref{20}).
     	The parameter $\kappa$ is set to satisfy (\ref{30}), i.e.,
     	\begin{align}\label{32}
     	\begin{split}
     	\xi_1 \kappa^2 + \xi_2 \kappa &<1, \\
     	\xi_3 + \xi_2 \kappa& <1,
     	\end{split}
     	\end{align}
     	where 
     	\begin{align*}
     	\xi_1&=\frac{6M^2(M^2 + \mu M_x^{2} M_y^{2} M_g^{2})}{T^2},\\
     	\xi_2&=\frac{M M_x M_y M_g}{T} \sqrt{6\mu(1+\mu)},\\
     	\xi_3&=\mu.
     	\end{align*}
     	The feasible range is denoted by $(0,\kappa^*)$. Based on (\ref{32}), we have 
     	\begin{align*}
     	\kappa^* =& \min \left\{ \frac{-\xi_2 + \sqrt{\xi^2_2 + 4 \xi_1} }{2 \xi_1}, \frac{1-\xi_3}{\xi_2} \right\},\\
     	=& \mathcal{O} \left(\min \left\{ \frac{T\sqrt{\mu(1+\mu)}}{\mu}, \frac{(1-\mu)T}{\sqrt{\mu(1+\mu)}} \right\} \right),\\
     	\rho=& \max\left\{ \xi_1 \kappa^2, \xi_3\right\}+ \xi_2 \kappa,\\
     	=& \mathcal{O} \left(\max \left\{ \frac{(1-\mu)^2}{1+\mu}, \mu\right\} +1-\mu\right).
     	\end{align*}
     	
\subsection{Proof of Lemma \ref{l8}}\label{F}
The analysis is similar to the proof of Appendix \ref{B}. First, similar to Lemma \ref{l1}, we have
	\begin{align}\label{add29}
    	|e_{\Phi}(x^a_k,\theta_k,d_k)|^2\leq \frac{\mu^{\prime} M^{2}_{y} M^{2}_{g}}{2 c_2} V(x^a_k,u_k,\theta_k,d_k).
    	\end{align}
Then, it follows that
\begin{align*}
		&V(x^a_k,u_k,\theta_k,d_k)\nonumber \\ 
		&\leq  2c_2 (\|x^a_k - x^a_{ss}(u_{k-1},\theta_{k-1},d_{k-1})\|^2 \nonumber \\
		&+ \|x^a_{ss}(u_{k-1},\theta_{k-1},d_{k-1})- x^a_{ss}(u_k,\theta_k,d_{k}) \|^2) \nonumber \\
		&\leq \mu^{\prime} V(x^a_{k-1},u_{k-1},\theta_{k-1}) \\
		&+ 2c_2(\|x_{ss}^{a}(u_{k-1},\theta_{k-1},d_{k-1})-x_{ss}^{a}(u_{k-1},\theta_{k-1},d_{k})\|^2\\
		&+ \|x_{ss}^{a}(u_{k-1},\theta_{k-1},d_{k})-x_{ss}^{a}(u_{k},\theta_{k},d_{k})\|^2)\nonumber \\
		&\leq \mu^{\prime} V(x^a_{k-1},u_{k-1},\theta_{k-1}) \\
		&+ 4c_2(\|x_{ss}^{a}(u_{k-1},\theta_{k-1},d_{k-1})-x_{ss}^{a}(u_{k-1},\theta_{k-1},d_{k})\|^2) \nonumber \\
		&+ 4c_2(\|x_{ss}^{a}(u_{k-1},\theta_{k-1},d_{k})-x_{ss}^{a}(u_{k},\theta_{k},d_{k})\|^2) \\
		&\leq \mu^{\prime} V(x^a_{k-1},u_{k-1},\theta_{k-1})) + 
		4c_2M_{x}^{2} \|\theta_k-\theta_{k-1}\|^2\\
		&+4c_2 (\|x_{ss}^{a}(u_{k-1},\theta_{k-1},d_{k})-x_{ss}^{a}(u_{k},\theta_{k},d_{k})\|^2).
		\end{align*}
	Based on Assumption \ref{ass7}, it can be inferred that
	\begin{align*}
	    \mathbb{E}_{d}&[(x_{ss}^a(u_{k-1},\theta_{k-1},d_{k-1})-x_{ss}^a(u_{k-1},\theta_{k-1},d_k))^2]\leq 4\sigma^2.
	\end{align*}
The upper bound of $\mathbb{E}_{v[k]}[\|\theta_k-\theta_{k-1}\|^2]$ is given as 
\begin{align*}
    \mathbb{E}_{v[k]}[\|\theta_k-\theta_{k-1}\|^2] \leq 2\eta^2 \mathbb{E}_{v_{[k]}}[\|\tilde{\phi}_{k-1}\|^2] + 8 \delta^2.
\end{align*}
Then, $\mathbb{E}_{v[k]}[V(x^a_k,u_k,\theta_k,d_k)]$ can be rewritten as \eqref{a30}.

\subsection{Proof of Lemma \ref{l9}}\label{G}
The analysis is similar to the proof of Appendix \ref{C}. Let $\Phi(\theta_k,d_k) \triangleq \Phi(\theta_k,h(u_k,\theta_k,d_k))$. With \eqref{ss11}, then we have
{\small{
\begin{align*}
		&\mathbb{E}_{v_{[k]}}[\|\tilde{\phi}_k\|^2] \nonumber \\
		&=\frac{p^2}{\delta^2} \mathbb{E}_{v_{[k]}}[\|v_k (\Phi(\theta_k,y^a_{k+1},d_k)- \Phi(\theta_{k-1},y^a_{k},d_{k-1}))\|^{2}] \nonumber\\
		&=  \frac{p^2}{\delta^2} \mathbb{E}_{v_{[k]}}[\|v_k (\Phi(\theta_k,d_k)- \Phi(\theta_{k-1},d_{k-1}) \nonumber\\
		&+ e_{\Phi}(x^a_k,\theta_k,d_k) - e_{\Phi}(x^a_{k-1},\theta_{k-1},d_{k-1}) )\|^{2}] \nonumber\\
		&\overset{(s.1)}{\leq}  \underbrace{\frac{3p^2}{\delta^2} \mathbb{E}_{v_{[k]}}[\|v_k (\Phi(\theta_k,d_k)- \Phi(\theta_{k-1},d_{k-1}))\|^{2}]}_{\textcircled{1}} \nonumber\\
		&+ \underbrace{\frac{3p^2}{\delta^2} \mathbb{E}_{v_{[k]}}[\|v_k e_{\Phi}(x^a_k,\theta_k,d_k)\|^{2}
		+ \|v_k e_{\Phi}(x^a_{k-1},\theta_{k-1},d_{k-1})\|^{2}]}_{\textcircled{2}}.
		\end{align*}}}
		Next, we provide the upper bound of the item $\textcircled{1}$ and $\textcircled{2}$, respectively.
		{\small{
		\begin{align*}
		\textcircled{1}
		\overset{(s.1)}{=}&\frac{3p^2}{\delta^2} \mathbb{E}_{v_{[k]}}[\|v_k (\Phi(\theta_k,d_k)- \Phi(\theta_k,d_{k-1}) \nonumber\\
		&+ \Phi(\theta_k,d_{k-1}) - \Phi(\theta_{k-1},d_{k-1}))\|^2] \nonumber \\
		\leq & \frac{6p^2}{\delta^2} \mathbb{E}_{v_{[k]}}[\|v_k (\Phi(\theta_k,d_k)- \Phi(\theta_k,d_{k-1}))\|^2]\nonumber \\
		&+ \frac{6p^2}{\delta^2} \mathbb{E}_{v_{[k]}}[\|v_k(\Phi(\theta_k,d_{k-1}) - \Phi(\theta_{k-1},d_{k-1}))\|^2]\nonumber \\
		\overset{(s.2)}{\leq}&\frac{6p^2}{\delta^2} (\mathbb{E}_{v_{[k]}}[|\Phi(\theta_k,d_{k-1}) - \Phi(\theta_{k-1},d_{k-1})|^4])^{\frac{1}{2}} (\mathbb{E}_{v_{[k]}}[\|v_k\|^4])^{\frac{1}{2}}\nonumber \\
		&+ \frac{6p^2}{\delta^2} \mathbb{E}_{v_{[k]}}[\|v_k(\Phi(\theta_k,d_{k-1})-\Phi(\theta_{k-1},d_{k-1}))\|^2]\nonumber \\
		\overset{(s.3)}{\leq}&\frac{24p^2 \sigma^2}{\delta^2} + \frac{6p^2}{\delta^2} \mathbb{E}_{v_{[k]}}[\|v_k(\Phi(\theta_k,d_{k-1})-\Phi(\theta_{k-1},d_{k-1}))\|^2]\nonumber \\
		\overset{(s.4)}{\leq}&
		\frac{24p^2 \sigma^2}{\delta^2}
		+\frac{12 \eta^2 p^2 M^2}{\delta^2} \mathbb{E}_{v_{[k]}}[\|\tilde{\phi}_{k-1}\|^2] +48p^2M^2,
		\end{align*}}}where $(s.1)$ holds by adding and minus $\Phi(\theta_k,d_{k-1})$, $(s.2)$ follows from the Cauchy-Schwarz inequality, $(s.3)$ follows from
		Assumption \ref{ass7} and $\|v_k\|=1$, (s.4) holds due to the same procedure as \eqref{add1} of the proof in Appendix \ref{C}. Similarly, the term $\textcircled{2}$ follows from \eqref{add2}. Based on the above inequalities, \eqref{add31} can be obtained. 
\subsection{Proof of Theorem \ref{optimality_noise_gap}}\label{H}
Following from the same procedure in Appendix \ref{E}, we have that
{\small{
\begin{align}\label{aa28}
     	&\frac{1}{T} \sum_{k=1}^{T} \mathbb{E}_{v_{[T]}}[\tilde{\Phi}(\theta_k) - \tilde{\Phi}(\theta_k^*) ] \nonumber\\
     	& \leq \frac{\eta^2 p^2}{\delta^2 T} ( \frac{2\delta^2}{p^2} + 98 M^2  + 96\mu^{\prime} M_x^{2} M_y^{2} M_g^{2} ) \sum_{k=1}^{T} \mathbb{E}_{v_{[T]}} [\|\tilde{\phi}_{k-1}\|^2]\nonumber \\
     	&+ \frac{12 \mu^{\prime}(\mu^{\prime}+1) p^2 M_{y}^{2} M_{g}^{2} }{ c_2 \delta^2 T} \sum_{k=1}^{T} \mathbb{E}_{v_{[T]}}[V(x^a_{k-1},u_{k-1},\theta_{k-1},d_{k-1})] \nonumber \\
     	&+ \frac{(192 \sigma^2+384\delta^2 M_{x}^{2})\mu^{\prime}p^2 M_{y}^{2} M_{g}^{2}}{\delta^2 T}+\frac{192p^2 \sigma^2}{\delta^2}+386p^2M^2
     	 + 2M \delta. 
     	\end{align}}}
     	
     	Combining \eqref{a30} and \eqref{add31}, \eqref{add27} can be reconstructed as
      \begin{align*}
          P^{\prime}=\left[ \begin{array}{cc}
     	p_{11}^{\prime} & \sqrt{p_{12}^{\prime}p_{21}^{\prime}}\\
     	\sqrt{p_{12}p_{21}^{\prime}} & p_{22}^{\prime}
     	\end{array}
     	\right]
      \end{align*}
     	with
     	\begin{align}\label{58}
     	p_{11}^{\prime}=& \frac{12p^2 \eta^2}{\delta^2}(M^2 + \mu^{\prime} M_x^{2} M_y^{2} M_g^{2}), \nonumber\\
     	p_{12}^{\prime}=& \frac{3\mu^{\prime} p^2 M_y^{2} M_g^{2}}{2 c_2 \delta^2}(1+\mu^{\prime}), \nonumber\\
     	p_{21}^{\prime}=& 8c_2 \eta^2 M_x^2, \nonumber \\
     	p_{22}^{\prime}=& \mu^{\prime},\nonumber\\
     	d_1^{\prime}=& 48p^2(M^2 + \mu^{\prime} M_x^{2} M_y^{2} M_g^{2})\nonumber \\
     	&+ \frac{24p^2 \sigma^2 (1+\mu^{\prime}M_y^{2} M_g^{2})+12p^2 \eta^2 \mu^{\prime}M_x^{2} M_y^{2} M_g^{2} }{\delta^2},\nonumber \\
     	d_2^{\prime}=& 32c_2 \delta^2 M_x^2+ 16c_2 \sigma^2.
     	\end{align}
     	Then, we have
		\begin{align*}\label{60}
     	 \max \{\sum_{k=1}^{T}&\mathbb{E}_{v_{[T]}}[\|\tilde{\phi}_{k}\|^2], \sum_{k=1}^{T} \mathbb{E}_{v_{[T]}}[\sqrt{\frac{p_{12}^{\prime}}{p_{21}^{\prime}}}V(x^a_{k},u_{k},\theta_{k},d_{k})]\} \nonumber \\
     	\leq &(\rho^{\prime^{T}} + \frac{1}{1-\rho^{\prime}})B_1^{\prime}+ \frac{T}{1-\rho^{\prime}}(d_1^{\prime}+\sqrt{\frac{p_{12}^{\prime}}{p_{21}^{\prime}}}d_2^{\prime}),\nonumber \\
     	=& \mathcal{O}\left(\frac{T}{1-\rho^{\prime}}(\frac{p^2 \sigma^2+p^2\eta^2}{\delta^2} +\frac{p \delta^2 +p \sigma^2}{\delta \eta})\right),
     	\end{align*}
     	where $B_1^{\prime}= \mathbb{E}[\|\tilde{\phi}({w_1})\|^2]+\mathbb{E}[\sqrt{\frac{p_{12}}{p_{21}}}V(x^a_{1},u_{1},\theta_1,d_1)]$ and $\rho^{\prime}<1$ is the maximum singular value of the matrix $P^{\prime}$.
     	
     	With \eqref{aa28} and \eqref{60}, it follows that
     	\begin{align}
     	&\frac{1}{T} \sum_{k=1}^{T} \mathbb{E}_{v_{[T]}}[\tilde{\Phi}(\theta_k) - \tilde{\Phi}(\theta_k^*) ] \leq l_3^{\prime} +\nonumber\\
     	& (l_1^{\prime} + \sqrt{\frac{p_{21}^{\prime}}{p_{12}^{\prime}}} l_2^{\prime})
     	\left\{(\rho^{\prime^{T}} + \frac{1}{1-\rho^{\prime}})B_1^{\prime}+ \frac{T}{1-\rho^{\prime}}(d_1^{\prime}+\sqrt{\frac{p_{12}^{\prime}}{p_{21}^{\prime}}}d_2^{\prime}) \right\}
     	\end{align}
     	where 
     	\begin{align*}
     	l_1^{\prime}=& \frac{2\eta^2}{T}+ \frac{p^2 \eta^2}{\delta^2 T}(98 M^2+ 96\mu^{\prime} M_x^{2} M_y^{2} M_g^{2}), \nonumber\\
     	l_2^{\prime}=& \frac{12 \mu^{\prime}(\mu^{\prime}+1) p^2 M_{y}^{2} M_{g}^{2} }{ c_2 \delta^2 T},\nonumber \\
     	l_3^{\prime}=& \frac{(192 \sigma^2+384\delta^2 M_{x}^{2})\mu^{\prime}p^2 M_{y}^{2} M_{g}^{2}}{\delta^2 T}\nonumber\\
     	&+\frac{192p^2 \sigma^2}{\delta^2}+386p^2M^2
     	 + 2M \delta.
     	\end{align*}
     	Due to $\delta=\frac{\epsilon}{M}$, we set $\eta=\frac{\kappa \sigma \epsilon}{p^2 \sqrt{T}}$ such that $\frac{p^4 \eta^2}{\epsilon^4}$ and $\frac{\sigma^2}{\epsilon^2 T}$ have the same order and $\kappa\in(0,\frac{p^2 \sqrt{T}}{\sigma \epsilon})$ such that $\eta \in (0,1)$. Then, the order of (\ref{aa28}) is shown as (\ref{aa20}).

\section*{Acknowledgments}
The authors would like to thank Zhiyu He (now pursuing Ph.D in ETH ) for early
inspiring discussions and valuable comments on this topic.

\bibliography{references}

\begin{thebibliography}{10}
\providecommand{\url}[1]{#1}
\csname url@samestyle\endcsname
\providecommand{\newblock}{\relax}
\providecommand{\bibinfo}[2]{#2}
\providecommand{\BIBentrySTDinterwordspacing}{\spaceskip=0pt\relax}
\providecommand{\BIBentryALTinterwordstretchfactor}{4}
\providecommand{\BIBentryALTinterwordspacing}{\spaceskip=\fontdimen2\font plus
\BIBentryALTinterwordstretchfactor\fontdimen3\font minus
  \fontdimen4\font\relax}
\providecommand{\BIBforeignlanguage}[2]{{%
\expandafter\ifx\csname l@#1\endcsname\relax
\typeout{** WARNING: IEEEtran.bst: No hyphenation pattern has been}%
\typeout{** loaded for the language `#1'. Using the pattern for}%
\typeout{** the default language instead.}%
\else
\language=\csname l@#1\endcsname
\fi
#2}}
\providecommand{\BIBdecl}{\relax}
\BIBdecl

\bibitem{antsaklis2007special}
P.~Antsaklis and J.~Baillieul, ``Special issue on technology of networked
  control systems,'' \emph{Proceedings of the IEEE}, vol.~95, no.~1, pp. 5--8,
  2007.

\bibitem{gupta2009networked}
R.~A. Gupta and M.-Y. Chow, ``Networked control system: Overview and research
  trends,'' \emph{IEEE Transactions on Industrial Electronics}, vol.~57, no.~7,
  pp. 2527--2535, 2009.

\bibitem{zhang2019networked}
X.-M. Zhang, Q.-L. Han, X.~Ge, D.~Ding, L.~Ding, D.~Yue, and C.~Peng,
  ``Networked control systems: A survey of trends and techniques,''
  \emph{IEEE/CAA Journal of Automatica Sinica}, vol.~7, no.~1, pp. 1--17, 2019.

\bibitem{liu2011false}
Y.~Liu, P.~Ning, and M.~K. Reiter, ``False data injection attacks against state
  estimation in electric power grids,'' \emph{ACM Transactions on Information
  and System Security (TISSEC)}, vol.~14, no.~1, pp. 1--33, 2011.

\bibitem{chen2017optimal}
Y.~Chen, S.~Kar, and J.~M. Moura, ``Optimal attack strategies subject to
  detection constraints against cyber-physical systems,'' \emph{IEEE
  Transactions on Control of Network Systems}, vol.~5, no.~3, pp. 1157--1168,
  2017.

\bibitem{zhao2020data}
Z.~Zhao, Y.~Huang, Z.~Zhen, and Y.~Li, ``Data-driven false data-injection
  attack design and detection in cyber-physical systems,'' \emph{IEEE
  Transactions on Cybernetics}, vol.~51, no.~12, pp. 6179--6187, 2020.

\bibitem{wang2021optimal}
X.-L. Wang, ``Optimal attack strategy against fault detectors for linear
  cyber-physical systems,'' \emph{Information Sciences}, vol. 581, pp.
  390--402, 2021.

\bibitem{zhang2022design}
Q.~Zhang, K.~Liu, D.~Han, G.~Su, and Y.~Xia, ``Design of stealthy deception
  attacks with partial system knowledge,'' \emph{IEEE Transactions on Automatic
  Control}, 2022.

\bibitem{kim2014subspace}
J.~Kim, L.~Tong, and R.~J. Thomas, ``Subspace methods for data attack on state
  estimation: A data driven approach,'' \emph{IEEE Transactions on Signal
  Processing}, vol.~63, no.~5, pp. 1102--1114, 2014.

\bibitem{an2017data}
L.~An and G.-H. Yang, ``Data-driven coordinated attack policy design based on
  adaptive $\mathcal{L}_2$-gain optimal theory,'' \emph{IEEE Transactions on
  Automatic Control}, vol.~63, no.~6, pp. 1850--1857, 2017.

\bibitem{Alisic23}
R.~Alisic, J.~Kim, and H.~Sandberg, ``Model-free undetectable attacks on linear
  systems using lwe-based encryption,'' \emph{IEEE Control Systems Letters},
  vol.~7, pp. 1249--1254, 2023.

\bibitem{zhang2015optimal}
H.~Zhang, P.~Cheng, L.~Shi, and J.~Chen, ``Optimal denial-of-service attack
  scheduling with energy constraint,'' \emph{IEEE Transactions on Automatic
  Control}, vol.~60, no.~11, pp. 3023--3028, 2015.

\bibitem{zhang2022new}
Y.~Zhang, Y.~Zhou, K.~Ji, and M.~M. Zavlanos, ``A new one-point
  residual-feedback oracle for black-box learning and control,''
  \emph{Automatica}, vol. 136, p. 110006, 2022.

\bibitem{Colombino20}
M.~Colombino, E.~Dall’Anese, and A.~Bernstein, ``Online optimization as a
  feedback controller: Stability and tracking,'' \emph{IEEE Transactions on
  Control of Network Systems}, vol.~7, no.~1, pp. 422--432, 2020.

\bibitem{luo2022model-free}
X.~Luo, C.~Fang, C.~Zhao, and J.~He, ``A model-free false data injection attack
  strategy in networked control systems,'' in \emph{IEEE Conference on Decision
  and Control (CDC), accepted}, 2022.

\bibitem{guo2018worst}
Z.~Guo, D.~Shi, K.~H. Johansson, and L.~Shi, ``Worst-case stealthy
  innovation-based linear attack on remote state estimation,''
  \emph{Automatica}, vol.~89, pp. 117--124, 2018.

\bibitem{zhang2014online}
H.~Zhang, P.~Cheng, J.~Wu, L.~Shi, and J.~Chen, ``Online deception attack
  against remote state estimation,'' \emph{IFAC Proceedings Volumes}, vol.~47,
  no.~3, pp. 128--133, 2014.

\bibitem{esmalifalak2011stealth}
M.~Esmalifalak, H.~Nguyen, R.~Zheng, and Z.~Han, ``Stealth false data injection
  using independent component analysis in smart grid,'' in \emph{2011 IEEE
  International Conference on Smart Grid Communications (SmartGridComm)}.\hskip
  1em plus 0.5em minus 0.4em\relax IEEE, 2011, pp. 244--248.

\bibitem{he2022model}
Z.~He, S.~Bolognani, J.~He, F.~D{\"o}rfler, and X.~Guan, ``Model-free nonlinear
  feedback optimization,'' \emph{arXiv preprint arXiv:2201.02395}, 2022.

\bibitem{dontchev2009implicit}
A.~L. Dontchev and R.~T. Rockafellar, \emph{Implicit functions and solution
  mappings}.\hskip 1em plus 0.5em minus 0.4em\relax Springer, 2009, vol. 543.

\bibitem{bof2018lyapunov}
N.~Bof, R.~Carli, and L.~Schenato, ``Lyapunov theory for discrete time
  systems,'' \emph{arXiv preprint arXiv:1809.05289}, 2018.

\bibitem{belgioioso2021sampled}
G.~Belgioioso, D.~Liao-McPherson, M.~H. de~Badyn, S.~Bolognani, J.~Lygeros, and
  F.~D{\"o}rfler, ``Sampled-data online feedback equilibrium seeking: Stability
  and tracking,'' \emph{arXiv preprint arXiv:2103.13988}, 2021.

\bibitem{jain2017non}
P.~Jain, P.~Kar \emph{et~al.}, ``Non-convex optimization for machine
  learning,'' \emph{Foundations and Trends{\textregistered} in Machine
  Learning}, vol.~10, no. 3-4, pp. 142--363, 2017.

\bibitem{nedic2018distributed}
A.~Nedi{\'c} and J.~Liu, ``Distributed optimization for control,'' \emph{Annual
  Review of Control, Robotics, and Autonomous Systems}, vol.~1, pp. 77--103,
  2018.

\bibitem{deng2001stabilization}
H.~Deng, M.~Krstic, and R.~J. Williams, ``Stabilization of stochastic nonlinear
  systems driven by noise of unknown covariance,'' \emph{IEEE Transactions on
  Automatic Control}, vol.~46, no.~8, pp. 1237--1253, 2001.

\bibitem{kullback1997information}
S.~Kullback, \emph{Information theory and statistics}.\hskip 1em plus 0.5em
  minus 0.4em\relax Courier Corporation, 1997.

\bibitem{maglaras2014intrusion}
L.~A. Maglaras and J.~Jiang, ``Intrusion detection in {SCADA} systems using
  machine learning techniques,'' in \emph{Science and Information Conference},
  2014, pp. 626--631.

\bibitem{goh2017anomaly}
J.~Goh, S.~Adepu, M.~Tan, and Z.~S. Lee, ``Anomaly detection in cyber physical
  systems using recurrent neural networks,'' in \emph{IEEE International
  Symposium on High Assurance Systems Engineering (HASE)}, 2017, pp. 140--145.

\bibitem{anthi2021adversarial}
E.~Anthi, L.~Williams, M.~Rhode, P.~Burnap, and A.~Wedgbury, ``Adversarial
  attacks on machine learning cybersecurity defences in industrial control
  systems,'' \emph{Journal of Information Security and Applications}, vol.~58,
  p. 102717, 2021.

\bibitem{gadginmath2022direct}
D.~Gadginmath, V.~Krishnan, and F.~Pasqualetti, ``Direct vs indirect methods
  for behavior-based attack detection,'' \emph{arXiv preprint
  arXiv:2209.07564}, 2022.

\bibitem{liu2018zeroth}
S.~Liu, X.~Li, P.-Y. Chen, J.~Haupt, and L.~Amini, ``Zeroth-order stochastic
  projected gradient descent for nonconvex optimization,'' in \emph{IEEE Global
  Conference on Signal and Information Processing (GlobalSIP)}, 2018, pp.
  1179--1183.

\end{thebibliography}

\bibliographystyle{IEEEtran}

\textbf{Xiaoyu Luo} (S'19) received B.E. degree in the Department of Automation from Tianjin University, Tianjin, China, in 2019.
	She is currently pursuing the Ph.D. degree with the Department of Automation, Shanghai Jiao Tong University, Shanghai, China. She is a member of Intelligent of Wireless Networking and Cooperative Control group. Her research interests include fault-tolerant control in multi-agent systems, cooperative charging in energy storage system and security of cyber-physical systems.
	
	\textbf{Chrongrong Fang} received the B.Sc. degree in automation and the Ph.D. degree in control science and engineering from Zhejiang University, Hangzhou, China, in 2015 and 2020, respectively. He is currently an Assistant Professor with the Department of Automation, Shanghai Jiao Tong University, Shanghai, China. His research interests include anomaly detection and diagnosis in cyber-physical systems and cloud networks.
	
	\textbf{Jianping He} (M’15-SM’19) is currently an associate professor in the Department of Automation at Shanghai Jiao Tong University. He received the Ph.D. degree in control science and engineering from Zhejiang University, Hangzhou, China, in 2013, and had been a research fellow in the Department of Electrical and Computer Engineering at University of Victoria, Canada, from Dec. 2013 to Mar. 2017. His research interests mainly include the distributed learning, control and optimization, security and privacy in network systems.
	Dr. He serves as an Associate Editor for IEEE Trans. on Control of Network Systems, IEEE Open Journal of Vehicular Technology and KSII Trans. Internet and Information Systems. He was also a Guest Editor of IEEE TAC, IEEE TII, International Journal of Robust and Nonlinear Control, etc. He was the winner of Outstanding Thesis Award, Chinese Association of Automation, 2015. He received the best paper award from IEEE WCSP’17, the best conference paper award from IEEE PESGM’17, the finalist best student paper award from IEEE ICCA’17, and the finalist best conference paper award from IEEE VTC’20-Fall.

 \textbf{Chengcheng Zhao} received the PhD degree in control science and engineering from Zhejiang University, Hangzhou, China, in 2018. She is currently a research fellow in the Department of Electrical and Computer Engineering, University of Victoria. Her research interests include consensus and distributed optimization, distributed energy management in smart grids, vehicle platoon, and security and privacy in network systems. She received IEEE PESGM 2017 best conference papers award, and one
	of her paper was shortlisted in IEEE ICCA 2017 best student paper award finalist. She is a peer reviewer for Automatica, IEEE Transactions on Information Forensics and Security, IEEE Transactions on Industrial Electronics and etc. She was the TPC member for IEEE GLOBECOM 2017, 2018, and IEEE ICC 2018.

 \textbf{Dario Paccagnan} is an Assistant Professor and a member of the Computational Optimization Group in the Department of Computing, Imperial College London. Previously, he was a Postdoctoral Fellow with the Mechanical Engineering Department and the Center for Control, Dynamical Systems and
Computation, University of California, Santa Barbara. In 2018, Dario obtained a Ph.D. degree from the Information Technology and Electrical Engineering Department, ETH Zurich, Switzerland. He received a B.Sc. and M.Sc. in Aerospace Engineering in 2011 and 2014 from the University of Padova, Italy, and a M.Sc. in Mathematical Modelling and Computation from the Technical University
of Denmark in 2014; all with Honors. Dario was a visiting scholar at the
University of California, Santa Barbara in 2017, and at Imperial College of
London, in 2014. His interests are at the interface between control theory and
game theory, with a focus on the design of behavior-influencing mechanisms
for socio-technical systems. Applications include multiagent systems and
smart cities. Dr. Paccagnan was awarded the ETH medal, and is recipient
of the SNSF fellowship for his work in Distributed Optimization and Game
Design.

\end{document}